\documentclass[fpartfrac]{myclass}

\algrenewcommand\algorithmicrequire{\textbf{Requires:}}
\algrenewcommand\algorithmicensure{\textbf{Outputs:}}

\newcommand\h{$h$\nobreakdashes}
\newcommand\bm{b^*}

\usepackage{semigroups}
\newcommand\titlefrac[2]{\multicolumn1c{$\dfrac{#1}{#2}$}}%

\usepackage[hidelinks]{hyperref}

\title{Families of Eliahou semigroups\\linked to Farey intervals}
\author{Axel Bacher}

\date{August 27, 2026}

\begin{document}

\maketitle

\begin{abstract}
We describe a way to explore the numerical semigroup tree to find all Eliahou
semigroups of conductor up to~200 (and conjecturally up to~400) thanks to a
new way of representing the semigroups and pruning off unwanted branches. This
proves that Wilf's conjecture holds for conductor up to~200.

Based on this list, we describe new classes of numerical semigroups, with a
Farey interval as a crucial parameter. We show that these semigroups probably
all satisfy Wilf's conjecture. We explicitly construct many families of
Eliahou semigroups belonging to these classes, encompassing families described
by Delgado, Eliahou and Fromentin, and Bras-Amorós.
\end{abstract}

\section{Introduction}

A numerical semigroup is a co-finite sub-semigroup of~$(\mathbb N, +)$. Given
some integers~$0 < \gamma_0 < \dotsb < \gamma_{\ell-1} < c$, we use the
classical notation:
\[S = \langle\gamma_0,\dotsc,\gamma_{\ell-1}\rangle_c\]
to mean the semigroup generated by~$\gamma_0,\dotsc,\gamma_{\ell-1}$ and all
integers starting from~$c$.
This construction is unique if
the~$\gamma_0,\dotsc,\gamma_{\ell-1}$ are primitive (they cannot be written as
the sum of two smaller elements) and~$c-1\notin S$. In this case, we
say that~$S$ is \emph{canonical}.
We call~$\gamma_0,\dotsc,\gamma_{\ell-1}$ the \emph{left generators} and~$c$
the \emph{conductor} of~$S$ (the value~$c-1$ is the \emph{Frobenius number}
of~$S$). Primitive elements beyond~$c$ are called \emph{right generators} and
we denote by~$r$ their number; the total number of generators~$e = \ell + r$
is called the \emph{embedding dimension} of~$S$. We also call \emph{rank}
of~$S$, and denote by~$k$, the number of left elements (elements less
than~$c$). The smallest generator~$m = \gamma_0$ plays a special role and is
called the \emph{multiplicity}. The number~$g = c - k$ is the number of gaps
in~$S$ and called the \emph{genus}, but we do not need it here. These
parameters, and the others defined below, are illustrated in
Figure~\ref{fig:full} in the next section.

Wilf~\cite{wilf} gives a simple conjecture concerning these parameters. Define
the \emph{Wilf number} of~$S$ as:
\begin{equation} \label{W} W = ke - c\text.\end{equation}
Wilf's conjecture states that every numerical semigroup satisfies~$W\ge0$.
This conjecture is nearly fifty years old and still open. It has been proven
to be true in some specific cases,
namely~$e\le3$~\cite{froberg-gottlieb-haggkvist}, $e\ge
m/3$~\cite{eliahougraph}, $c\le3m$~\cite{eliahou}, $e\ge m/4$ when~$m$
divides~$c$~\cite{eliahou2}, $k\le12$ and~$m\le19$ (see~\cite{survey} for a
survey).

Eliahou~\cite{eliahou} gives a sufficient condition for a semigroup to satisfy
Wilf's conjecture. It is easily seen that the right generators of~$S$ lie in
the interval~$[c,c+m)$. Let~$s = m-r$ be the number of elements in this
interval that are not primitive. Also write~$c = qm - \rho$ for~$0\le\rho < m$
(the number~$q$ is called the \emph{depth} of~$S$).  The \emph{Eliahou number}
of~$S$ has the two equivalent definitions:
\begin{align}
\label{Er} E &= k\ell + qr - c\text,\\
\label{Es} E &= k\ell - qs + \rho\text.
\end{align}
Since~$k\ge q$ and~$e = \ell+r$, we have~$E\le W$ from \eqref{Er} and thus any
semigroup satisfying~$E\ge0$ satisfies Wilf's conjecture. Semigroups
satisfying~$E < 0$ are called \emph{Eliahou semigroups} and are very rare.
Understanding Eliahou semigroups appears useful to better understand Wilf's
conjecture.

The first Eliahou semigroups, namely~$\langle14,22,23\rangle_{56}$,
$\langle16,25,26\rangle_{64}$, $\langle17,26,28\rangle_{68}$,
$\langle17,27,28\rangle_{68}$ and~$\langle18,28,29\rangle_{72}$, were
discovered by Fromentin by an exhaustive search of all semigroups
with~$g\le60$. This search was later expanded to reach semigroups with~$g\le
100$~\cite{bras-amoros-rodriguez,fromentin-hivert,bras-amoros,delgado-eliahou-fromentin,delgado-trimming}.
These searches are based on an exploration of the \emph{numerical semigroup
tree}, described in~\cite{rosales-garcia-sanchez}. Delgado~\cite{delgado},
Eliahou and Fromentin~\cite{eliahou-fromentin}, Almirón and
Moyano-Fernández~\cite{almiron-moyano-fernandez} and
Bras-Amorós~\cite{bras-amoros} describe several infinite families of Eliahou
semigroups; notably, they show that there exist semigroups with~$E$ taking an
arbitary value in~$\mathbb Z$.

For this work, we computed Eliahou semigroups up to conductor~$200$, which we
expanded to~$400$ with some heuristics, using a variant of Bras-Amorós's
seeds algorithm~\cite{bras-amoros-fernandez,bras-amoros}. All Eliahou
semigroups we found satisfy~$W\ge0$.

Based on this list, we define the classes of \emph{\h-regular} semigroups and
\emph{collision-free} semigroups. All the semigroups described
in~\cite{delgado,eliahou-fromentin,bras-amoros} are \h-regular and
collision-free, as well as every Eliahou semigroup with conductor up to~$218$;
the ones in~\cite{almiron-moyano-fernandez} are nearly so, as are the rest of
the semigroups we found.

Our classification uses two key ingredients. The first is the notion of
\emph{$B_h$~set}, already used in the context of Eliahou semigroups by
Delgado~\cite{delgado}. If~$\Delta\subset\mathbb N$ is a finite set
and~$h\ge0$, we denote by~$h\Delta$ the \emph{\h-fold sumset} of~$\Delta$:
\[h\Delta = \{\delta_1 + \dotsb + \delta_h \mid \delta_1,\dotsc,\delta_h\in
\Delta\}\text.\]
We denote by:
\[\mchoose{\abs{\Delta}}{h} = \binom{\abs{\Delta} + h - 1}{h}\]
the number of ways to take~$h$ elements of~$\Delta$ with repetitions allowed.
If the sums of~$h\Delta$ are pairwise distinct up to permutation, the
set~$\Delta$ is called a \emph{$B_h$~set}. This is equivalent
to~$\abs{h\Delta} = \smchoose{\abs\Delta}{h}$. Relevant ways to
construct~$B_h$~sets are found e.g.\ in~\cite{obryant}.

The second is \emph{\h-Farey intervals}. Given an integer~$h\ge1$, an
\emph{\h-Farey fraction} is a reduced fraction of the form~$a/b$ where~$b\le
h$. Two consecutive \h-Farey fractions form an \emph{\h-Farey interval} of the
form~$(a'/b', a/b]$. It is known that~$ab' = a'b + 1$, which shows
that~$b$ and~$b'$ are coprime and that the width of the interval
is~$\frac1{bb'}$. Moreover, we have~$b + b' \le h$ (since the
fraction~$\smash{\frac{a+a'}{b+b'}}$ lies in between). Farey fractions
are discussed in greater detail in \cite[Chapter~4]{concrete}.

The paper is organized as follows. In Section~\ref{sec:experimental}, we
explain how we searched for Eliahou semigroups experimentally and proved that
Wilf's conjecture holds for~$c\le200$. In Section~\ref{sec:properties}, we
define \h-regular and collision-free semigroups and prove some of their
properties, in particular concerning their Eliahou and Wilf numbers. In
Section~\ref{sec:construction}, we give an explicit general construction of
\h-regular semigroups. In Section~\ref{sec:eliahou}, we use this to construct
families of Eliahou semigroups and show how the families already published fit
into them.

\section{Experimental search for Eliahou semigroups} \label{sec:experimental}

\subsection{Double elements algorithm}

The algorithm we used to search for Eliahou semigroups is a variant of the one
presented in~\cite{bras-amoros} to explore the numerical semigroup tree, in
which one goes from parent to child by removing a right generator from the
semigroup. Let~$L = S\cap[0,c)$ be the set of left elements of~$S$ and~$D = L
+ L$ the set of \emph{double elements}. Since the set of right generators
is~$[c,c+m)\setminus D$, knowing~$D$ is enough to know which elements we may
remove.

In the exploration, we consider semigroups of the form~$S = \langle
m,\Gamma\rangle_c$, but without assuming that~$S$ is canonical; the
rank and other parameters are counted according to this value of~$c$, not the
real conductor of~$S$. Let~$S'$ be a semigroup with conductor~$c' = c+1$ (and
rank~$k'$, etc.) and agreeing with~$S$ on~$\{0,\dotsc,c-1\}$. There are three
possibilities.
\begin{itemize}
\item The point~$c$ is a gap of~$S'$. This is only possible if~$c\notin D$.
In this case, we have~$L' = L$ and~$r' = r-\mathbb 1_{c+m\in D}$.
\item The point~$c$ is in~$S'$ but not a generator. This is only possible
if~$c\in D$. In this case, we have~$L' = L\cup\{c\}$ and~$D' =
D\cup(c+L')$. Since~$c\in D$ and~$c+m\in D'$, we have~$r' = r$.
\item The point~$c$ is a left generator of~$S'$. This is also only possible
if~$c\notin D$. In this case, we have~$L' = L\cup\{c\}$ and~$D' =
D\cup(c+L')$. Since~$c\notin D$ and~$c+m\in D'$, we have~$r' = r-1$.
\end{itemize}
In essence, we have replaced the usual numerical semigroup tree, which has
arity bounded by~$m$, by a unary-binary tree by adding some intermediary
nodes (the non-canonical semigroups).

To explore this tree, we track the following variables, with
Algorithm~\ref{algo:update} showing the procedures to update them:
\begin{itemize}
\item the integers~$c_S$, $k_S$, $\ell_S$ and~$r_S$
representing~$c$, $k$, $\ell$ and~$r$;
\item the bitfields~$\mathcal L_S$ and~$\mathcal D_S$ representing~$(L-1)\cap\mathbb
N$ and~$(D-c)\cap\mathbb N$, subsets of~$\{0,\dotsc,c-2\}$.
\end{itemize}
We can easily compute~$q = \ceil{c/m}$ and thus~$E$ from these variables.
Moreover, by starting with semigroups of the form~$\langle m\rangle_{m+1}$,
the multiplicity~$m$ is kept constant, which enables~$q$ to be tracked without
a division.

\begin{algorithm}[ht]
\caption{Going from~$c$ to~$c+1$} \label{algo:update}
\smallskip
\begin{minipage}[t]{.49\textwidth}
\begin{algorithmic}[1]
\Require $0\notin \mathcal D_S$
\Procedure{add\_gap}{$S$}
  \State $\mathcal D_S\gets \mathcal D_S - 1$
  \State $c_S\gets c_S + 1$
  \If{$m-1\in \mathcal D_S$}
    \State $r_S\gets r_S - 1$
  \EndIf
\EndProcedure
\Statex
\end{algorithmic}
\begin{algorithmic}[1]
\Require $0\in \mathcal D_S$
\Procedure{add\_non\_gen}{$S$}
  \State $\mathcal L_S\gets \mathcal L_S\cup\{c_S-1\}$
  \State $\mathcal D_S\gets (\mathcal D_S\setminus\{0\} - 1)\cup \mathcal L_S$
  \State $c_S\gets c_S + 1$
  \State $k_S\gets k_S + 1$
\EndProcedure
\end{algorithmic}
\end{minipage}
\begin{minipage}[t]{.49\textwidth}
\begin{algorithmic}[1]
\Require $0\notin \mathcal D_S$
\Procedure{add\_left\_gen}{$S$}
  \State $\mathcal L_S\gets \mathcal L_S\cup\{c_S-1\}$
  \State $\mathcal D_S\gets (\mathcal D_S - 1) \cup \mathcal L_S$
  \State $c_S\gets c_S + 1$
  \State $k_S\gets k_S + 1$
  \State $\ell_S\gets \ell_S + 1$
  \State $r_S\gets r_S - 1$
\EndProcedure
\end{algorithmic}
\end{minipage}\smallskip
\end{algorithm}

The reason we track the sets~$\mathcal L_S$ and~$\mathcal D_S$ is to check
if~$0$ and~$m-1$ are in~$\mathcal D_S$. If we are interested in semigroups
with~$c \le c_{\max}$, it follows that we only need the lowest~$c_{\max} - c +
m$ bits of both sets. As they have~$c-1$ bits, we only need~$(c_{\max} + m -
1)/2$ bits.
It may seem that we lose some information on the semigroup this way, but as
explained below, we use a trick to retain full knowledge of it.

\subsection{Depth-first traversal and branch pruning}

Following~\cite{fromentin-hivert,bras-amoros-fernandez,bras-amoros}, we
perform a depth-first search of the tree, which uses the least amount of
memory. Since the tree grows impractically fast with~$c$, we need to prune
branches that we know do not contain Eliahou semigroups. Consider~$S = \langle
m,\Gamma\rangle_c$ and~$S' = \langle
m,\Gamma,\gamma_{\ell},\dots,\gamma_{\ell'-1}\rangle_c$ with~$\ell'>\ell$.
Assume that~$c$ lies in a given target interval~$[c_{\min},c_{\max}]$.
Let~$E$ and~$E'$ be the Eliahou numbers of~$S$ and~$S'$, respectively.

\begin{lemma} \label{lem:prune}
Let~$k_{\min}$ be the rank of~$\langle m,\Gamma\rangle_{c_{\min}}$.
If~$(k_{\min}+1)(\ell+1) \ge c_{\max}$, then~$E'\ge0$.
\end{lemma}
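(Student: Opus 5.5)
The plan is to bound $E'$ from below using only the first form \eqref{Er} of the Eliahou number, $E' = k'\ell' + q'r' - c$. Here $k'$, $\ell'$, $r'$ and $q'$ are the rank, number of left generators, number of right generators and depth of~$S'$, all computed with respect to the same~$c$. Since $q'\ge 1$ and $r'\ge 0$, the term $q'r'$ is nonnegative, so $E'\ge k'\ell' - c$. This remains valid in the non-canonical setting of the exploration, because the parameters are counted relative to the given~$c$ and remain nonnegative. It therefore suffices to show $k'\ell'\ge c$. By the hypothesis $(k_{\min}+1)(\ell+1)\ge c_{\max}$, together with $c\le c_{\max}$ (as $c$ lies in the target interval), this follows if we establish the two inequalities $\ell'\ge \ell+1$ and $k'\ge k_{\min}+1$.

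The first inequality is immediate from the assumption $\ell'>\ell$.

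For the second, I would compare left elements. The left elements of~$S'$ are $S'\cap[0,c)$, which contains $\langle m,\Gamma\rangle\cap[0,c)$. Since $c\ge c_{\min}$, this in turn contains $\langle m,\Gamma\rangle\cap[0,c_{\min})$, a set of size $k_{\min}$. It remains to exhibit one more left element of~$S'$ outside $\langle m,\Gamma\rangle$. The natural candidate is $\gamma_\ell<c$: as an additional left generator it is primitive in~$S'$, so it cannot be a sum of smaller elements of $S'$, in particular of elements of $\{m\}\cup\Gamma$. Hence $\gamma_\ell\notin\langle m,\Gamma\rangle$, and $k'\ge k_{\min}+1$.

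Chaining these gives
\[
E'\ge k'\ell'-c\ge (k_{\min}+1)(\ell+1)-c_{\max}\ge 0.
\]
The only step requiring care is the claim that $\gamma_\ell$ is genuinely new, i.e.\ not already in $\langle m,\Gamma\rangle$. If one does not want to rely on primitivity in a possibly non-canonical~$S'$, I would instead argue that otherwise the generator $\gamma_\ell$ is redundant and may be dropped without changing $S'$; this decreases $\ell'$, and one reduces to the case where at least one added generator is new. The bound $\ell'\ge\ell+1$ is still needed after such a reduction, so this is the step I would check most carefully.
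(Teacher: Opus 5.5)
Your argument is correct and is essentially the paper's proof: drop the nonnegative term $q'r'$ from \eqref{Er}, then combine $k'\ge k_{\min}+1$, $\ell'\ge\ell+1$ and $c\le c_{\max}$. The hedge in your last paragraph is unnecessary, and it would not rescue the case where every added generator is redundant, since then $S'=S$; as the paper implicitly uses, an added left generator is by construction not in $\langle m,\Gamma\rangle$ (the exploration only adds $\gamma_\ell\notin D$ at the current conductor), which is exactly why it contributes a new left element.
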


\begin{proof}
The semigroup~$S$ has at least~$k_{\min}$ left elements. Adding one left
generator adds at least one to~$k$ and~$\ell$, so the lemma follows
from~$E = k\ell + qr - c$.
\end{proof}

\begin{lemma} \label{lem:gamma1}
If~$\gamma_{\ell} \ge c + m - \gamma_1$, then~$E'\ge E$.
\end{lemma}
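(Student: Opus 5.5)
The plan is to compare $S$ and $S'$ through the form \eqref{Es} of the Eliahou number, $E = k\ell - qs + \rho$. Both semigroups are taken with the same conductor parameter~$c$ and the same multiplicity~$m$. So $q$ and~$\rho$ agree, and
\[E' - E = (k'\ell' - k\ell) - q(s' - s)\text.\]
Write $t = \ell' - \ell \ge 1$. I would show two bounds, $k' \ge k + t$ and $s' - s \le t$, and then use $k \ge q$.

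The first bound is immediate. The new generators $\gamma_\ell,\dotsc,\gamma_{\ell'-1}$ are primitive in~$S'$, so none of them lies in~$S$. They are also less than~$c$, so they are new left elements, giving $k' \ge k + t$. For~$k\ge q$, note that $0, m, \dotsc, (q-1)m$ are all left elements of~$S$, since $(q-1)m < c$. Granting $s'-s\le t$, we get
\[E' - E \ge (k+t)(\ell+t) - k\ell - qt \ge tk - qt \ge 0\text.\]

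The main step is $s' - s \le t$, where $s$ counts the elements of $[c,c+m)$ lying in $D = L + L$. A new element of $D'\cap[c,c+m)$ is a sum $x + y$ of two left elements of~$S'$, at least one of which is not in~$S$. Every element of~$S'$ not in~$S$ has the form $\gamma + z$, where $\gamma$ is a new generator and $z\in S'$. Hence a new double element can be written $\gamma + w$ with $\gamma \in \{\gamma_\ell,\dotsc,\gamma_{\ell'-1}\}$, $w\in S'$ and $\gamma + w < c+m$.

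The hypothesis on $\gamma_\ell$ controls~$w$. Since the generators are increasing, every new generator satisfies $\gamma \ge \gamma_\ell \ge c + m - \gamma_1$. Therefore $w < c + m - \gamma \le \gamma_1$.

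Next I identify what~$w$ can be. The elements of~$S'$ below~$\gamma_1$ are exactly the multiples of~$m$. Any new generator exceeds $\gamma_{\ell-1} \ge \gamma_1$, so it cannot contribute below~$\gamma_1$. Hence $w = jm$ for some~$j\ge0$.

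For a fixed~$\gamma$, at most one value of~$j$ puts $\gamma + jm$ in the interval $[c,c+m)$, because that interval has length~$m$. So each new generator adds at most one new double element to $[c,c+m)$, which gives $s' - s \le t$.

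The hardest part is the bookkeeping in this step. First, I must make sure that sums of two new elements cannot land below $c+m$. This is handled by the same bound: such a sum is at least $\gamma + \gamma_1 \ge c+m$. Second, there is the degenerate case $\ell = 1$, where $\gamma_1$ is one of the new generators rather than an old one. There I would read $\gamma_1$ as the smallest non-multiple of~$m$ in~$S'$ and check that the same argument goes through. If it does not, I would restrict to $\ell\ge2$, which is the situation in which the lemma is applied in the pruning.
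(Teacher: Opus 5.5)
Your proof is correct and follows the same route as the paper's. You get at least one new left element per added generator, and at most one new threshold element $\gamma+jm$ (equivalently, one lost right generator) per added generator, because $\gamma+\gamma_1\ge c+m$; then $k\ge q$ finishes the argument. Working with \eqref{Es} rather than \eqref{Er} is only a change of bookkeeping, and your explicit treatment of $t=\ell'-\ell$ added generators spells out what the paper's one-line proof leaves implicit.
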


\begin{proof}
The semigroup~$E'$ has at least one more left generator and left element
than~$E$. As~$\gamma_\ell + \gamma_1 > c+m$, it has at most one fewer right
generator of the form~$\gamma_\ell + nm$. Since~$k\ge q$, we get the result.
\end{proof}

Using both these results, we explore the descendants of a semigroup~$\langle
m,\Gamma\rangle_{c}$ in two stages.
\begin{itemize}
\item First, we build all semigroups~$\langle m,\Gamma\rangle_{c'}$ for~$c\le
c'\le c_{\max}$. We output any Eliahou semigroups found when~$c'\ge c_{\min}$
and compute in passing the number~$k_{\min}$.
\item Second, only if~$(k_{\min}+1)(\ell+1) < c_{\max}$, we perform the
depth-first traversal: for all~$c \le \gamma_\ell < c_{\max} + m
- \gamma_1$ such that~$\gamma_\ell\notin D$, we recursively explore the
  descendants of the semigroup~$\langle
m,\Gamma,\gamma_\ell\rangle_{\gamma_\ell+1}$.
\end{itemize}
To facilitate this algorithm, we start the exploration with semigroups of the
form~$\langle m,\gamma_1\rangle_{\gamma_1+1}$, so the
bound~$c_{\max}+m-\gamma_1$ remains fixed. This is described formally in
Algorithm~\ref{algo:explore}. Note that each recursive call is done with a
higher value of~$\ell$, so the condition~$(k_{\min}+1)(\ell+1) < c_{\max}$
keeps the size of the calling stack under~$\sqrt{c_{\max}}$. Moreover, the
left generators of~$S$ consist of the conductors of all semigroups in the
stack, so we can recover full knowledge of~$S$ by looking into the stack.
Alternatively, one could track the list of left generators of~$S$ in a
separate variable.

Since Eliahou showed that all semigroups with~$c\le 3m$
have~$E\ge0$~\cite{eliahou}, we need to run this algorithm for all
quadruplets~$(m,\gamma_1,c_{\min},c_{\max})$ with~$c_{\min}\ge3m+1$, which can
be done in parallel.

\begin{algorithm}[ht]
\caption{Exploring the descendants of a semigroup} \label{algo:explore}
\begin{algorithmic}[1]
\Ensure all Eliahou descendants of~$S$ with~$c_{\min}\le c\le c_{\max}$
\Procedure{explore}{$S,c_{\min},c_{\max}$}
 \State $S'\gets S$
 \While{$c_{S'} < c_{\max}$}
  \If{$c_{S'} = c_{\min}$}
   \State $k_{\min}\gets k_{S'}$
  \EndIf
  \If{$0\in \mathcal D_{S'}$}
   \State \Call{add\_non\_gen}{$S'$}
  \Else
   \State \Call{add\_gap}{$S'$}
   \If{$c_{S'}\ge c_{\min}$ \textbf{and} $k_{S'}\ell_{S'} + \lceil c_{S'}/m\rceil r_{S'} - c_{S'} < 0$}
    \State \Output{$S'$}
   \EndIf
  \EndIf
 \EndWhile

 \If{$(k_{\min}+1)(\ell_S+1) < c_{\max}$} \label{line:kl}
  \While{$c_S < c_{\max} + m - \gamma_1$} \label{line:gamma}
   \If{$0\in \mathcal D_S$}
    \State \Call{add\_non\_gen}{$S$}
   \Else
    \State $S'\gets S$
    \State \Call{add\_left\_gen}{$S'$}
    \State \Call{explore}{$S',c_{\min},c_{\max}$}
    \State \Call{add\_gap}{$S$}
   \EndIf
  \EndWhile
 \EndIf
\EndProcedure
\end{algorithmic}
\end{algorithm}

After the exploration, we take all Eliahou semigroups found and try adding
left generators greater than or equal to~$c_{\max}+m-\gamma_1$ to see if we
find any more Eliahou semigroups. Given their rarity, this takes negligible
time.

If one is willing to use some heuristic statements, we can speed up this
exploration. Specifically, we state four conjectures, detailed below. Proving
these conjectures, or some weaker form thereof, would enable us to rigorously
find all Eliahou semigroups for a higher conductor.

\begin{conjecture} \label{conj1}
If~$\gamma_{\ell}\ge c + m - 2\gamma_1$ and~$E\ge0$, then~$E'\ge0$.
\end{conjecture}

\begin{conjecture} \label{conj2}
If~$r < \ell$, then~$E\ge0$.
\end{conjecture}

\begin{conjecture} \label{conj3}
If~$\gamma_1\le m + 6$, then~$E\ge0$.
\end{conjecture}

\begin{conjecture} \label{conj4}
If~$c \le 4m - \binom{\ell}{3}$, then~$E\ge0$.
\end{conjecture}

If~$\gamma_{\ell}\ge c + m - 2\gamma_1$, adding~$\gamma_{\ell}$ can
subtract up to~$\ell$ right generators (since we have~$\gamma_\ell + \gamma_i +
\gamma_j \ge c+m$ for $i,j\ge1$).
It is not true in general that~$E'\ge E$:
for example, $S = \langle23,39,40,42\rangle_{77}$ has~$E = 15$ but~$S' =
\langle23,39,40,42,54\rangle_{77}$ has~$E' = 14$. We hope that this does
not happen if~$E < 0$ or at least not unless~$S$ is very large.

The only semigroup that we found with~$\gamma_1 = m + 7$
is~$\langle19,26,27\rangle_{90}$, the first Bras-Amorós
semigroup~\cite{bras-amoros}. The value~$c = 4m - \smash{\binom{\ell}{3}} + 1$
is reached by the Eliahou--Fromentin semigroups~\cite{eliahou-fromentin},
which we also believe to be optimal. The minimal value of~$r$ of Eliahou
semigroups seems to be superlinear in~$\ell$ (see Section~\ref{sec:wilf}).

If we assume Conjecture~\ref{conj1}, we can change the condition on
Line~\ref{line:gamma} in the algorithm. If we assume Conjecture~\ref{conj2},
we can change the product~$(k_S + 1)(\ell_S + 1)$ to~$(k_S + q_{\min} +
1)(\ell_S + 1)$ in Line~\ref{line:kl}, where~$q_{\min} = \ceil{c_{\min} / m}$.
Conjectures~\ref{conj3} and~\ref{conj4} let us shorten the list of
semigroups~$\langle m,\gamma_1\rangle_{\gamma_1+1}$ we need to explore.

\subsection{Practical implementation and results} \label{sec:practical}

Following again~\cite{fromentin-hivert}, we implemented our algorithm by
taking advantage of the computer architecture. We store the sets~$\mathcal L_S$
and~$\mathcal D_S$ in 256-bit registers and operate on them using instructions
in the AVX2 set. The right rotation of a register, used to compute~$\mathcal
D_S - 1$, can be computed in four instructions (\verb!vpermq!, \verb!vpsrlq!,
\verb!vpsllq!, \verb!vpor!). The register representing the set~$\{c_S-1\}$ can
be computed in one instruction (\verb!vpsllvd!) from a vector
representing~$c_S$ which can be incremented in one instruction
(\verb!vpaddd!).

We used intervals~$[c_{\min},c_{\max}]$ of length~$8$, which seemed best in
practice and handled the parallel computations with a simple MPI distributed
program. We ran three experiments, all taking a few hours on~$10$ machines
with~$40$ cores:
\begin{itemize}
\item for $c\le200$ assuming no conjectures, which yielded~$778$
semigroups;
\item for $c\le320$ assuming Conjecture~\ref{conj1}, which
yielded~$17064$ semigroups;
\item for $c\le400$ assuming Conjectures~\ref{conj1}, \ref{conj2}, \ref{conj3}
and~\ref{conj4} and restricting the search to~$\ell\le7$, which
yielded~$104569$ semigroups, all with~$\ell\le6$.
\end{itemize}
In all three cases, we always have~$(c_{\max} + m - 1)/2 \le 256$, so
using~256-bit registers is enough (in the third case, this is true because we
assumed Conjecture~\ref{conj4}).
Adding generators greater than~$c+m-2\gamma_1$ did not yield any extra
semigroups, but it does eventually happen; an example is the semigroup
\[\langle435, 684, 711, 720, 723, 724, 1346\rangle_{1740}\]
which has~$E = -3$, but still satisfies Conjecture~\ref{conj1} (how we came up
with this example is explained in Section~\ref{sec:beyond}). We checked that
all these semigroups satisfy~$W\ge0$, so Wilf's conjecture holds
for~$c\le200$.

The code and its outputs are available at the following address:
\begin{center}
\url{https://depot.lipn.univ-paris13.fr/bacher/eliahou-semigroups}
\end{center}


\section{Collision-free \texorpdfstring{$h$}{h}-regular semigroups}
\label{sec:properties}

All the Eliahou semigroups we found belong, or nearly belong (see
Section~\ref{sec:beyond} for examples), to two classes that we define below.
Throughout the section, consider a semigroup~$S = \langle m,\Gamma\rangle_c$,
where~$\Gamma = \{\gamma_1,\dotsc,\gamma_{\ell-1}\}$ is the set of the left
generators except~$m$.

\subsection{Definitions} \label{sec:definitions}

The interval~$[c,c+m)$, called the \emph{threshold interval} of the
semigroup, plays a central role in relation to the Wilf and
Eliahou numbers of~$S$, as already recognized by Eliahou~\cite{eliahou}. We
call elements of the threshold interval that are not generators
\emph{threshold elements} (their number is~$m - r$, which we denoted by~$s$).
Understanding the structure of the threshold elements of~$S$ is key to
understanding~$S$ and its Eliahou number, as is seen throughout this paper.

\begin{definition} \label{def:regular}
If~$h$ is an integer, we say that the semigroup~$S$ is \emph{\h-regular}
if~$h\Gamma\subset[c,c+m)$.
\end{definition}

\begin{definition} \label{def:collision}
We say that~$S$ is \emph{collision-free} if every element less than~$c+m$ can
be written in a unique way as a combination of generators.
\end{definition}

If~$S$ is \h-regular, it is collision-free if and only if~$\Gamma$ is
a~$B_h$~set and the sumsets~$i\Gamma$ are pairwise disjoint modulo~$m$. An
example is shown in Figure~\ref{fig:full}.

\begin{figure}[ht]%
\begin{tikzpicture}[semigroups]
\draw [sgline] (0,0) -- ++(70,0);
\draw [dotted,thick] (71,.5) -- ++(1.5,0);
\foreach \x in {0,56,70} { \sgtick{\x} }
\foreach \x/\t in {0/$0$,14/$m$,56/$c$,70/$c+m$} \node at (\x.5,-3) {\t};

\begin{scope}[yshift=7cm]
\begin{scope}[xshift=11.5cm]
\foreach \y in {1,...,3} \coordinate (lg\y) at (\y,0);
\node at (2,1) {left generators};
\end{scope}

\begin{scope}[xshift=22.5cm]
\foreach \y in {1,...,13} \coordinate (le\y) at (\y,0);
\node at (7,1) {left elements};
\end{scope}

\begin{scope}[xshift=44.5cm]
\foreach \y in {1,...,4} \coordinate (rg\y) at (\y,0);
\node at (2.5,1) {right generators};
\end{scope}

\begin{scope}[xshift=59.5cm]
\foreach \y in {1,...,10} \coordinate (ce\y) at (\y,0);
\node at (5.5,1) {threshold elements};
\end{scope}
\end{scope}

\draw[help lines]
 \foreach \x [count=\y] in {14,22,23} { (lg\y) -- (\x.5,1) }
 \foreach \x [count=\y] in {0,14,22,23,28,36,37,42,44,45,46,50,51}
  { (le\y) -- (\x.5,1) }
 \foreach \x [count=\y] in {57,61,62,63} { (rg\y) -- (\x.5,1) }
 \foreach \x [count=\y] in {56,58,59,60,64,65,66,67,68,69}
  { (ce\y) -- (\x.5,1) };

\foreach \x in {0,14,...,56} { \sgpoint{0}{\x}{0} }
\foreach \x in {22,23,36,37,50,51,64,65} { \sgpoint{1}{\x}{0} }
\foreach \x in {44,45,46,58,59,60} { \sgpoint{2}{\x}{0} }
\foreach \x in {66,...,69} { \sgpoint{3}{\x}{0} }
\foreach \x in {57,61,62,63} { \sgrightgen{\x}{0} }
\end{tikzpicture}\\[3mm]%
\begin{tikzpicture}[semigroups]\sglegend3\end{tikzpicture}
\caption{The semigroup~$\langle14,22,23\rangle_{56}$, smallest Eliahou
semigroup. Left and threshold elements are elements of the sets~$i\Gamma +
\mathbb Nm$ for~$i = 0,1,2,3$. It is a~$3$\nobreakdashes-regular
collision-free semigroup with~$\ell = 3$, $k = 13$, $r = 4$, $s = 10$, $q =
4$, $\rho = 0$, $E = -1$, $W = 35$.}
\label{fig:full}
\end{figure}

If~$S$ is \h-regular, then left and threshold elements of~$S$ are of the
form~$\lambda + jm$, where~$\lambda$ is in a sumset~$i\Gamma$ with~$i\le h$
(the elements of~$i\Gamma$ for~$i = 0,\dotsc,h$, along with the right
generators, form the \emph{Apéry set} of~$S$ with respect to~$m$).
Every~$\lambda\in i\Gamma$ thus
determines~$\smash{\ceil{\frac{c-\lambda}{m}}}$ left elements.
As~$h\Gamma\subset[c,c+m)$, we get:
\begin{equation} \label{short}
\ceil{\frac{(h-i)(c+m)}{hm}} - 1\le
\ceil{\frac{\mathstrut c-\lambda}{m}} \le
\ceil{\frac{(h-i)(c+m)}{hm}}\text.
\end{equation}

\begin{definition} \label{def:short}
We say that the element~$\lambda\in i\Gamma$ is \emph{short} if it satisfies
the lower bound of~\eqref{short} and \emph{long} otherwise. We denote
by~$\omega$ the number of long elements in~$S$.
\end{definition}

Figure~\ref{fig:short} illustrates this definition (we do not represent right
generators).

\begin{figure}[ht]%
\begin{tikzpicture}[semigroups]
\draw[sgline] (0,0) -- ++(64,0);
\foreach \x in {0,50,64} { \sgtick{\x} }
\foreach \x in {0,14,28,42,56} {\sgpoint{0}{\x}{0}}
\foreach \x in {17,19,31,33,45,47,59,61} {\sgpoint{1}{\x}{0}}
\foreach \x in {34,36,38,48,50,52,62} {\sgpoint{2}{\x}{0}}
\foreach \x in {51,53,55,57} {\sgpoint{3}{\x}{0}}
\foreach \x in {34,48,62} \fill[color2] (\x.5,-.5) circle(.2);
\end{tikzpicture}\\[3mm]%
\begin{tikzpicture}[semigroups]\sglegend3\end{tikzpicture}
\caption[]{The semigroup~$\langle14,17,19\rangle_{50}$, a
$3$\nobreakdashes-regular collision-free semigroup. The element of~$2\Gamma$
marked with a dot is long: it determines~$2$ left elements instead of~$1$.}
\label{fig:short}
\end{figure}

Finally, the bounds~\eqref{short} suggest introducing the \h-Farey interval
containing~$\sfrac{c+m}{hm}$, that is, the smallest interval of the form:
\begin{equation*}
\frac{a'}{b'} < \frac{c+m}{hm} \le \frac{a}{b}
\end{equation*}
where $b\le h$ and $b'\le h$.
We denote~$\phi_i = \fpart{-ia/b}$
and~$\phi'_i = \fpart{ia'/b'}$. As the interval $(ia'/b',ia/b)$ does not
contain an integer, we have for~$i = 1,\dotsc,h$:
\begin{equation} \label{farey}
\ceil{\frac{i(c+m)}{hm}} - 1 = \ceil{\frac{\mathstrut ia}{b}} - 1 =
\floor{\frac{ia'}{b'}} = \frac{ia}{b} + \phi_i - 1 = \frac{ia'}{b'} -
\phi'_i\text.
\end{equation}

\subsection{Eliahou number}

From now on, assume that~$S$ is a collision-free \h-regular semigroup with
Farey interval~$(a'/b',a/b]$. The number of elements in the sumset~$i\Gamma$
is:
\[s_i = \mchoose{\ell-1}{i}\text.\]
From~\eqref{short} and~\eqref{farey}, we have:
\begin{equation} \label{k}
k = \sum_{i=0}^{h-1}\mkern1mu\floor{\frac{(h-i)a'}{b'}}s_i + \omega\text.
\end{equation}
The number~$s$ of threshold elements is:
\[s = s_0 + \dotsb + s_h = \mchoose{\ell}{h}\text.\]
In the section, we make use of the following two summation identities:
\begin{align}
\label{chu} &\ell\sum_{i=0}^{h-1}\frac{(h-i)s_i}{h} = s\text,\\
\label{isi} &\ell\sum_{i=0}^{h-1}\frac{is_i}{h} = (h-1)s_h\text.
\end{align}
The first is a consequence of the multiset version of the Chu-Vandermonde
identity and~$\ell\smchoose{\ell+1}{h-1} = hs$, while the second is derived
by writing~$is_i = (\ell-1)\smchoose{\ell}{i-1}$ and, after summation,
$\ell(\ell-1)\smchoose{\ell+1}{h-2} = h(h-1)s_h$.

\begin{theorem} \label{thm:E}
If~$S$ is collision-free, we have~$E = E_0 + \ell\omega + \rho$, where~$E_0$
has the four equivalent forms:
\begin{align}
\label{E'} E_0 &= -\ell\sum_{i=0}^{h-1}\phi'_{h-i}s_i + \phi'_hs\text,\\
\label{E} E_0 &= -(h-1)s_h + \ell\sum_{i=0}^{h-1}\phi_{h-i}s_i - \phi_hs\text,\\
\label{E'a} E_0 &= -\floor{h\phi'_1}s + \ell\sum_{j=1}^{\floor{h\phi'_1}}
\mchoose{\ell}{h-\ceil{j/\phi'_1}}\text,\\
\label{Ea} E_0 &= -(h-1)s_h + \floor{h\phi_1}s - \ell\sum_{j=1}^{\floor{h\phi_1}}
\mchoose{\ell}{h - \ceil{j/\phi_1}}\text.
\end{align}
\end{theorem}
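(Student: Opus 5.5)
We start from the second form~\eqref{Es} of the Eliahou number, $E = k\ell - qs + \rho$, substitute the expression~\eqref{k} for $k$, and express the depth $q$ through the Farey interval. Since $c = qm-\rho$ with $0\le\rho<m$, we have $q = \ceil{c/m} = \ceil{(c+m)/m} - 1$. Applying~\eqref{farey} with $i = h$ then gives
\[
q = \floor{\frac{ha'}{b'}} = \frac{ha'}{b'} - \phi'_h = \frac{ha}{b} + \phi_h - 1 .
\]
The $\omega$ term in~\eqref{k} contributes exactly $\ell\omega$ to $k\ell$, and $\rho$ carries over unchanged. So everything reduces to identifying $E_0 = \ell\sum_{i<h}\floor{(h-i)a'/b'}s_i - qs$.

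For~\eqref{E'}, we write $\floor{(h-i)a'/b'} = (h-i)a'/b' - \phi'_{h-i}$. The linear part becomes $\frac{a'}{b'}\,\ell\sum_i (h-i)s_i = \frac{ha'}{b'}s$ by~\eqref{chu}. This cancels the $\frac{ha'}{b'}s$ coming from $qs$, leaving $-\ell\sum_i\phi'_{h-i}s_i + \phi'_h s$.

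For~\eqref{E}, we use instead $\floor{(h-i)a'/b'} = (h-i)a/b + \phi_{h-i} - 1$. The linear part gives $\frac{ha}{b}s$, which again cancels against $qs$. The constant $-1$ produces $-\ell\sum_i s_i$. Adding~\eqref{chu} and~\eqref{isi} shows $\ell\sum_i s_i = s + (h-1)s_h$. Combined with the $-(\phi_h - 1)s$ from $-qs$, the two $\pm s$ terms cancel and we obtain~\eqref{E}.

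For the forms~\eqref{E'a} and~\eqref{Ea}, we observe that $\phi'_i = \fpart{i\phi'_1}$ because $a'/b'\equiv\phi'_1 \pmod 1$, and similarly $\phi_i = \fpart{i\phi_1}$. Writing $\fpart{x} = x - \floor{x}$ in~\eqref{E'}, the linear part gives
\[
\ell\sum_i (h-i)\phi'_1 s_i = h\phi'_1 s
\]
by~\eqref{chu}. Together with $\phi'_h s = (h\phi'_1 - \floor{h\phi'_1})s$, this leaves $-\floor{h\phi'_1}s + \ell\sum_i \floor{(h-i)\phi'_1}s_i$.

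We then write $\floor{(h-i)\phi'_1} = \#\{j\ge1 : j\le (h-i)\phi'_1\}$ and swap the order of summation. The condition $j\le(h-i)\phi'_1$ is equivalent to $i\le h - \ceil{j/\phi'_1}$, since $i$ is an integer. The index $j$ runs over $1,\dots,\floor{h\phi'_1}$, because $i\ge0$ forces $j\le h\phi'_1$. The inner sum $\sum_{i\le n} s_i = \sum_{i\le n}\smchoose{\ell-1}{i}$ equals $\smchoose{\ell}{n}$ by the multiset hockey-stick identity. This yields~\eqref{E'a}, and the identical manipulation applied to~\eqref{E} yields~\eqref{Ea}.

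The main point needing care is this interchange of summation. We must verify the boundary cases, namely that when $j/\phi'_1$ is an integer the equivalence still holds with the ceiling, and that the range of $j$ is exactly $1,\dots,\floor{h\phi'_1}$. The rest is bookkeeping with~\eqref{chu} and~\eqref{isi}.
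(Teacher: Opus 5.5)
Your proof is correct and follows the paper's route: the same substitutions $\floor{(h-i)a'/b'} = (h-i)a'/b' - \phi'_{h-i}$ (resp.\ $(h-i)a/b + \phi_{h-i} - 1$) and $q = ha'/b' - \phi'_h$ (resp.\ $ha/b + \phi_h - 1$), combined with \eqref{chu} and \eqref{isi}, give \eqref{E'} and \eqref{E}. The splitting $\phi'_{h-i} = (h-i)\phi'_1 - \floor{(h-i)\phi'_1}$ then gives \eqref{E'a} and \eqref{Ea}, and your count-and-interchange of the floor sum, using the partial sums $\sum_{i\le n}s_i = \smchoose{\ell}{n}$, is exactly the paper's Abel summation with $v_i = \smchoose{\ell}{i-1}$ written as a swap of summation order; your boundary cases are settled correctly by the integrality of $i$.
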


It is convenient to use \eqref{Ea} if~$\phi_1$ is low, since the sum has few
summands. If~$\phi_1$ is high, $\phi'_1 = 1 - \sfrac1{bb'} - \phi_1$ is low,
so using~\eqref{E'a} is practical. This result is further discussed in
Section~\ref{sec:eliahou} where we use it to construct semigroups with~$E <
0$.

\begin{proof}
We use~$E = k\ell - qs + \rho$ and write~$\floor{ia'/b'} = ia'/b' - \phi'_i$.
From~\eqref{k}, \eqref{chu} and~$q = ha'/b' - \phi'_h$, we get~\eqref{E'}.

Writing instead~$\floor{ia'/b'} = ia/b + \phi_i - 1$ and~$q = ha/b + \phi_h -
1$, we get:
\[E = \ell\sum_{i=0}^{h-1}(\phi_{h-i}-1)s_i + (1-\phi_h)s + \ell \omega + \rho\text.\]
Using the sum of \eqref{chu} and \eqref{isi} to compute
$\ell\mkern2mu\smash{\sum}s_i$, this yields~\eqref{E}.

To show~\eqref{E'a}, write~$\phi'_{h-i} = (h-i)\phi'_1 -
\floor{(h-i)\phi'_1}$. We use~\eqref{chu} as well as Abel's lemma, or
summation by parts. This states that:
\[\sum_{i=0}^{h-1} u_i(v_{i+1} - v_{i}) = u_{h}v_{h} - u_{0}v_{0} -
\sum_{i=0}^{h-1} (u_{i+1} - u_{i})v_{i+1}\text.\]
We use~$u_i = \floor{(h-i)\phi'_1}$ and~$v_i = \smchoose{\ell}{i-1}$, so~$s_i
= v_{i+1} - v_{i}$. Moreover, we have~$u_{h} = v_0 = 0$ and~$u_{i+1} - u_{i} =
-1$ if and only if~$h-i = \ceil{u_i/\phi'_1}$, which shows~\eqref{E'a}. The
case of \eqref{Ea} is identical.
\end{proof}

\subsection{Wilf number} \label{sec:wilf}

From the Eliahou number, we can compute the Wilf number~$W = E + (k
- q)r$. Given this formula, proving that~$W\ge0$ has to involve a lower bound
on~$r$.

\begin{theorem} \label{thm:W}
If~$S$ is \h-regular, collision-free and~$r\ge\ell$, then~$W\ge0$.
\end{theorem}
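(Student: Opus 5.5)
The plan is to reduce the statement to Theorem~\ref{thm:E}. The starting point is the identity $W = E + (k-q)r$, which follows by comparing \eqref{W} with \eqref{Er}. Since $k\ge q$ and $r\ge\ell$, we get $W\ge E + (k-q)\ell$. Substituting $E = E_0 + \ell\omega + \rho$ with $\omega,\rho\ge0$, it suffices to show
\[E_0 + \ell(k-q)\ge 0.\]
From~\eqref{farey} with $i=h$ we have $q = \floor{ha'/b'}$. Since $s_0=1$, formula~\eqref{k} gives
\[k-q = \sum_{i=1}^{h-1}\floor{\frac{(h-i)a'}{b'}}s_i + \omega.\]
For $E_0$ I will use the form~\eqref{E'}, so the quantity to control is
\[\ell\sum_{i=1}^{h-1}\Bigl(\floor{\tfrac{(h-i)a'}{b'}} - \phi'_{h-i}\Bigr)s_i \;-\; \ell\phi'_h + \phi'_h s .\]

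Next I compare term by term. The $i=0$ contribution $\phi'_h(s-\ell)$ is nonnegative, because $s = \smchoose{\ell}{h}\ge\ell$ for $h\ge1$. For $1\le i\le h-1$, write $j = h-i$. Whenever $ja'/b'\ge1$, we have $\floor{ja'/b'}\ge1>\phi'_j$, so that summand is positive. The problem therefore reduces to the indices $j$ with $ja'/b'<1$. For those, the floor vanishes and $\phi'_j = ja'/b'$, so each such summand contributes the negative amount $-\ell\,\frac{ja'}{b'}s_{h-j}$.

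To limit these bad indices, I first reduce to $c>3m$, which I may do since Eliahou's theorem gives $E\ge0$, hence $W\ge0$, when $c\le 3m$. Then $(c+m)/(hm)>4/h$, and since $\lceil 4/h\rceil$-type fractions are $h$-Farey, $a'/b'\ge 4/h$ roughly. So $ja'/b'<1$ forces $j<h/4$, that is, $i>3h/4$. The negative contributions are then at most $\ell\sum_{i>3h/4}\frac{(h-i)}{h}\cdot\frac{h a'}{b'}\cdot\frac1{\ldots}$. More usefully, their total is at most $\ell\sum_{i>3h/4} s_i\cdot\frac{(h-i)a'}{b'}$.

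I then have to absorb these with the positive slack: the terms with $\floor{ja'/b'}-\phi'_j\ge 1-\phi'_j$ for larger $j$, together with the $i=0$ slack $\phi'_h(s-\ell)$. A cleaner way to organize this is to use~\eqref{chu} in the form
\[\ell\sum_{i}\frac{(h-i)a'}{b'}s_i = \frac{ha'}{b'}s .\]
With it, the whole quantity can be rewritten as
\[\frac{ha'}{b'}\,s - \ell\,\frac{ha'}{b'} - (\text{total fractional losses}) + \phi'_h(s-\ell) - \ldots,\]
where each fractional loss is less than $\ell s_i$. Since $q\ge4$ makes $ha'/b'\ge4$, the main term $(ha'/b')(s-\ell)$ should dominate $\ell\sum_{i\ge1}s_i$. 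Here I would use the bound $\ell\sum_{i<h}s_i\le\ell\sum_i\frac{(h-i)}{h}s_i\cdot h = hs$ and compare it against~\eqref{chu}.

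The main obstacle is exactly this absorption step: making the inequality between the main term and the fractional losses rigorous for all $h$ and $\ell$. Small $\ell$ (for example $\ell=2$, where $s-\ell$ is small) may need a separate check. If the crude bound fails, I would fall back on~\eqref{E'a}, where $\floor{h\phi'_1}$ counts the bad terms explicitly, and compare the resulting sum directly with $\ell(k-q)$.
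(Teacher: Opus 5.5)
Your reduction is correct: $W=E+(k-q)r\ge E+(k-q)\ell$, and combining \eqref{k} with \eqref{E'} gives
\[E_0+\ell(k-q)=\ell\sum_{i=1}^{h-1}\Bigl(\Bigl\lfloor\frac{(h-i)a'}{b'}\Bigr\rfloor-\phi'_{h-i}\Bigr)s_i+\phi'_h(s-\ell)+\ell\omega .\]
You also correctly note that every summand with $\lfloor ja'/b'\rfloor\ge1$ (where $j=h-i$) is positive.

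The gap is the absorption of the indices with $ja'/b'<1$. You never carry this step out, and it cannot be done with the information you use. Eliahou's $c\le3m$ theorem only yields $a'/b'\ge4/h$. Suppose $a'/b'$ really were as small as $4/h$ (say $h$ odd, so $\phi'_h=0$, and $\omega=0$). Then the bad indices $i>3h/4$ are exactly the ones carrying the largest $s_i$. A Riemann-sum computation shows that for moderately large $\ell$ (e.g.\ $\ell=10$) and large $h$ they outweigh all the positive terms. So the inequality you are trying to prove is false if all you know is $a'/b'\ge4/h$, and no refinement of your absorption sketch can rescue it.

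The missing idea is that $h$-regularity already forces $a'/b'\ge1$. For $\ell\ge2$ we have $m<\gamma_1$ and $h\gamma_1\in h\Gamma\subset[c,c+m)$, so $(c+m)/(hm)>1$. Since $1/1$ is an $h$-Farey fraction, it cannot lie strictly between the consecutive fractions $a'/b'$ and $a/b$, hence $a'/b'\ge1$. (For $\ell=1$, all $s_i$ with $i\ge1$ vanish and there is nothing to prove.) Therefore $\lfloor ja'/b'\rfloor\ge j\ge1>\phi'_j$ for every $1\le j\le h-1$. There are no bad indices, your displayed quantity is a sum of nonnegative terms, and the proof is complete without Eliahou's theorem or any absorption.

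The paper relies on the same fact, in the form $q=\lfloor ha'/b'\rfloor\ge h+\lfloor h\phi'_1\rfloor$, which it combines with the lower bound $E_0\ge-\lfloor h\phi'_1\rfloor s$ read off from \eqref{E'a}. Once patched, your term-by-term argument via \eqref{E'} is an equally short and arguably more transparent route to the same conclusion.
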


\begin{proof}
If~$\ell\ge r$, we have~$W\ge 2E - q\ell + qs - \rho$. We
get~$E\ge-\floor{h\phi'_1}s + \rho$ from~\eqref{E'a}, which shows that
$W \ge \bigl(q - 2\floor{h\phi'_1}\bigr)(s - \ell) + 2\floor{h\phi'_1}\ell$.
Since~$a'/b'\ge1$ and so~$q = \floor{ha'/b'}\ge h + \floor{h\phi'_1}$, we
have~$W\ge0$.
\end{proof}

In particular, if Conjecture~\ref{conj2} holds, every \h-regular
collision-free semigroup satisfies~$W\ge0$. This condition is probably
overkill. Indeed, all Eliahou semigroups that we found with~$c\le400$
satisfy~$E + k - q\ge0$, so the bound~$r\ge1$ would suffice (this is not true
in general: a collision-free semigroup with~$h = 3$, $a/b = 5/3$, $\ell = 8$
and~$\omega = \rho = 0$ satisfies~$E + k - q = -7$). However, packing
a~$B_3$~set modulo~$m$ of cardinality~$\ell\ge3$ with only~$\ell$ gaps is
probably already impossible (Figure~\ref{fig:r}). This suggests that there is
no \h-regular collision-free counterexample to Wilf's conjecture.

\begin{figure}[ht]%
\begin{tikzpicture}[semigroups]
\sgdraw{14,22,23}{56}
\begin{scope}[yshift=-5cm]
\sgdraw{30, 44, 48, 49}{118}
\end{scope}
\begin{scope}[yshift=-10cm]
\sgdraw{55, 82, 85, 90, 91}{219}
\end{scope}
\end{tikzpicture}\\[3mm]%
\begin{tikzpicture}[semigroups]\sglegend3\end{tikzpicture}
\caption[]{The threshold intervals of the Eliahou semigroups in our
experimental set with the lowest value of~$r$ depending on~$\ell$ (gaps
represent right generators).

Top: the semigroup~$\langle14,22,23\rangle_{56}$ ($\ell = 3$, $r = 4$, $E = -1$, $W = 35$).

Middle: the semigroup~$\langle30,44,48,49\rangle_{118}$ ($\ell = 4$, $r = 10$,
$E = -2$, $W = 148$).

Bottom: the semigroup~$\langle55,82,85,90,91\rangle_{219}$ ($\ell = 5$, $r =
22$, $E = -1$, $W = 483$, two collisions).}
\label{fig:r}
\end{figure}

\subsection{Semigroups with collisions} \label{sec:collisions}

The results above apply to collision-free semigroups. It may seem that, as
semigroups with collisions have a lower rank than collision-free semigroups
with the same parameters, they might have a lower~$E$ or~$W$. The result
below, while partial, suggests that this is not the case.

In the following, a \emph{collision} is a pair of sums of at most~$h$ elements
of~$\Gamma$ that give the same result modulo~$m$. A collision at~$\lambda$ is
\emph{primitive} if one cannot write~$\lambda = \mu + \nu$ where a
smaller collision occurs at~$\mu$.

\begin{lemma} \label{lem:collisions}
If $S$ has one primitive collision, then~$E > E_0 + \rho + \ell\omega$.
\end{lemma}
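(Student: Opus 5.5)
We compare $S$ with a collision-free \h-regular semigroup having the same parameters $m$, $c$, $\ell$, $h$ and the same Farey interval, and track how the single primitive collision changes $k$ and $s$ in $E = k\ell - qs + \rho$. Suppose the primitive collision occurs at $\lambda\in i\Gamma$, with the colliding sum in $i'\Gamma$, $i'\le i$ (collisions between sums in the same sumset are included with $i'=i$); the two sums are congruent modulo $m$. By primitivity and \h-regularity, the coincidences between sums of at most $h$ elements of $\Gamma$ modulo $m$ are exactly those of the form $\lambda+\nu$ versus $\lambda'+\nu$, where $\lambda'$ is the colliding partner and $\nu$ ranges over $j\Gamma$ with $i+j\le h$. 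These are the only identifications, since any further one would produce a second primitive collision.

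\textbf{Losses.} In the collision-free count \eqref{k}, each residue class of an element of $j'\Gamma$ contributes about $\floor{(h-j')a'/b'}$ left elements. If two sums coincide modulo $m$, the class is counted once, namely by its smaller representative. So each merged pair costs $k$ the contribution of the larger representative, which is $\floor{(h-i-j)a'/b'}$ (up to the long/short correction, absorbed into $\omega$). The threshold count $s$ drops by one for each merged pair whose larger member lands in $[c,c+m)$. This happens precisely for $j = h-i$, and also for all other merged classes: each class contributes exactly one element of the threshold interval, since the Apéry element plus the right multiple of $m$ lies there. The loss in $s$ is therefore the number of merged pairs, which is $N_i=\sum_{j=0}^{h-i} s_j$ with $s_j=\mchoose{\ell-1}{j}$, or more precisely the number of sums extending $\lambda$.

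\textbf{Balance.} We get
\[E - (E_0+\rho+\ell\omega) \;\ge\; -\ell\sum_{j=0}^{h-i-1}\floor{\frac{(h-i-j)a'}{b'}}s_j + q\sum_{j=0}^{h-i} s_j .\]
Use $q\ge\floor{ha'/b'}$ together with $\floor{(h-i-j)a'/b'}\le (h-i-j)a'/b'$ and compare with the identity \eqref{chu} applied at level $h-i$ in place of $h$. That identity gives $\ell\sum_{j}(h-i-j)s_j = (h-i)\mchoose{\ell}{h-i}$, up to the index bookkeeping with $\ell-1$ versus $\ell$. The right-hand side then becomes $\frac{a'}{b'}\bigl(h\,N - (h-i)\mchoose{\ell}{h-i}\bigr)$ minus fractional terms. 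Strict positivity should follow because $h\ge h-i$ and $N$ dominates: $N$ counts all sums of total level at most $h-i$, while the subtracted quantity is weighted only by $h-i$.

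\textbf{Obstacle.} The main difficulty is the exact bookkeeping of which merged classes land in the threshold interval versus the left region, and of the fractional parts $\phi'$. Here I would redo the computation of Theorem~\ref{thm:E}, specifically the derivation of \eqref{E'}, restricted to the subtree of sums extending $\lambda$, and check that the fractional terms have favorable sign. If the crude bound $q\ge\floor{ha'/b'}$ is too weak when $i$ is small, I would instead use $q = ha'/b'-\phi'_h$ and apply Abel summation as in the proof of \eqref{E'a} to isolate a strictly positive leading term.
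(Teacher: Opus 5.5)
Your argument follows the paper's proof. The colliding sum with more generators becomes redundant together with its $s_j$ extensions in each $(i+j)\Gamma$. You count the resulting losses in $k$ and $s$, substitute into $E=k\ell-qs+\rho$, and apply \eqref{chu} at level $h-i$; the paper's $j$ is your $h-i$. Your balance inequality is right. It is in fact an equality once $\omega$ counts long elements only among sums that actually represent elements of $S$, which is what ``absorbed into $\omega$'' amounts to.

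The one real gap is the final step, which you leave hedged. Your justification ``$h\ge h-i$'' only makes the leading term nonnegative and does not beat the unfavorable term $-\phi'_hN$. The step closes in one line. First, $N=\sum_{j=0}^{h-i}s_j=\smchoose{\ell}{h-i}$ exactly, by the hockey-stick identity, so $N$ does not ``dominate'' anything. Second, $q=\floor{ha'/b'}=ha'/b'-\phi'_h$ holds with equality by \eqref{farey}, so the fallback in your last paragraph is the same computation, not a stronger one. Hence
\[E-(E_0+\rho+\ell\omega)=\Bigl(\frac{i\,a'}{b'}-\phi'_h\Bigr)\mchoose{\ell}{h-i}+\ell\sum_{j=0}^{h-i-1}\phi'_{h-i-j}s_j,\]
which is positive because $i\ge1$, $\phi'_h<1$, and $a'/b'\ge1$ (as $c+m>h\gamma_1>hm$). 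The same inequality $c>(h-1)m$ also shows that every element of $(n+1)\Gamma$ exceeds every element of $n\Gamma$ for $n<h$. That justifies your claim that the larger member of each merged pair is the one extending $\lambda$.

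Compared with the paper: the paper charges an extra $+1$ per redundant left element and concludes from $a'/b'\ge1$ and $j\le h-2$. Those $+1$'s actually contribute $-\ell\smchoose{\ell}{j-1}$; the displayed $-\smchoose{\ell}{j-1}$ drops the factor $\ell$. The two stated facts do not always dominate this term: for $a'/b'=1$, $h=5$, $j=3$, $\ell=5$ it gives $2\cdot35-5\cdot15<0$. Your convention for $\omega$ avoids that correction entirely. So once the last line above is written out, your version is the cleaner and more robust argument.
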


\begin{proof}
Assume that the primitive collision occurs at~$\lambda$. Consider the sum
giving~$\lambda$ with the higher number of generators, say~$h-j$. Any sum of
generators containing that sum is redundant. There are~$s_i$ such sums in the
sumset~$(h-j+i)\Gamma$ for~$0\le i\le j$. The decrease in left elements
compared to the collision-free case is thus, at most:
\[\sum_{i=0}^{j-1}\left(\floor{\frac{(j-i)a'}{b'}}+1\right)s_i\text.\]
The decrease in threshold elements is $s_0 + \dotsb + s_j =
\smchoose{\ell}{j}$. Using~$q = ha'/b' - \phi'_h$ and~\eqref{chu}, the change
to the Eliahou number~$k\ell-qs+\rho$ is at least:
\[\left(\frac{(h-j)a'}{b'} - \phi'_h\right)\mchoose{\ell}{j} -
\mchoose{\ell}{j-1} +
\ell\sum_{i=0}^{j-1}\phi'_{j-i}s_i\text.\]
Since~$a'/b' \ge 1$ and $j \le h-2$ ($\Gamma$ does not collide with itself),
this is positive.
\end{proof}

\section{General construction of \texorpdfstring{$h$}{h}-regular semigroups} \label{sec:construction}

We show here how to explicitly construct \h-regular semigroups with any values
of the parameters~$h$, $a/b$ and~$\ell$ and to give conditions under which
they are collision-free.

\subsection{Split semigroups} \label{sec:split}

Fix~$h\ge3$ and an~\h-Farey interval~$(a'/b',a/b]$. Consider a semigroup given
as~$S = \langle m,\Gamma\rangle_c$ with~$h\gamma_{\ell-1} < c+m$. Let~$w$
and~$t$ be the values:
\begin{equation} \label{wt}
w = c + m - h\gamma_1\qquad\text{and}\qquad t = \frac{ahm}{b} - m - c\text.
\end{equation}
By construction, the semigroup~$S$ is \h-regular if and only if~$w \le m$ and
its Farey interval is~$(a'/b',a/b]$ if and only if~$0\le t < \sfrac{hm}{bb'}$
as the width of the interval is~$\sfrac1{bb'}$. We assume that both are true
from now on. In this case, we have~$\rho = \phi_hm + t$; if~$b$ divides~$h$,
then~$\rho = t$. These parameters are illustrated in Figure~\ref{fig:params}.

\begin{figure}[ht]
\begin{tikzpicture}[semigroups]
\draw[help lines] (0,6) -- ++(0,-6);
\draw[help lines] (33/4,2) -- ++(0,-2);
\draw[help lines] (26,6) -- ++(0,-6);
\draw[help lines] (149/3,2) -- ++(0,-2);
\draw[help lines] (63,2) -- ++(0,-2);
\draw[help lines] (71,2) -- ++(0,-2);

\begin{scope}[yshift=-4cm,xshift=.5cm]
\node at (0,0) {$c$};
\node at (33/4,0) {$\dfrac{ham}{b}-m$};
\node at (26,0) {$qm$};
\node at (149/3,0) {$\dfrac{ha'm}{b'}$};
\node at (63,0) {$h\gamma_1$};
\node at (71,0) {$c+m$};
\end{scope}

\sgdraw{71,121,122}{542}
\draw[sgline,latex-latex] (0,2) -- node[above]{$\mathstrut t$} ++(33/4,0);
\draw[sgline,latex-latex] (33/4,2) -- node[above]{$\mathstrut\phi_hm$} (26,2);
\draw[sgline,latex-latex] (26,2) -- node[above]{$\mathstrut\phi'_hm$} (149/3,2);
\draw[sgline,latex-latex] (63,2) -- node[above]{$\mathstrut w$} (71,2);
\draw[sgline,latex-latex] (0,6) -- node[above]{$\rho$} (26,6);
\end{tikzpicture}\\[3mm]%
\begin{tikzpicture}[semigroups]\sglegend5\end{tikzpicture}
\caption{The threshold interval of the
semigroup~$\langle71,121,122\rangle_{542}$. The endpoints of its Farey
interval~$(5/3,7/4]$ and its parameters~$t$ and~$w$ are shown. It is visible
that~$q = \ceil{ha/b} - 1
= \floor{ha'/b'}$ and~$\rho = \phi_hm + t$.}
\label{fig:params}
\end{figure}

For Theorem~\ref{thm:E} to be applicable, however, the semigroup~$S$ needs to
be canonical ($c-1\notin S$) and collision-free. A necessary condition for~$S$
to be collision-free is that~$\Gamma$ is a~$B_h$~set. We give below a
sufficient condition, which is when the sumsets~$i\Gamma$ live in disjoint
intervals modulo~$m$.

\begin{definition} \label{def:split}
If~$S$ is \h-regular, let~$U_0 = \{0\}$ and, for~$i=1,\dotsc,h$, let~$U_i$ be
the interval~$\bigl[i\gamma_1,i(c+m)/h\bigr)$.
We say that the semigroup~$S$ is
\emph{split} if the intervals~$U_i$ are pairwise disjoint modulo~$m$.
\end{definition}

For example, the semigroup at the top of Figure~\ref{fig:r} is split, while
the other two are not.

\begin{theorem} \label{thm:criteria}
Let~$\bm = b\bmod b'$. Consider the conditions:
\begin{align} \label{c1}
t &\ge \frac{(h-b)w}{b}\text,\\ \label{cunif}
(b-\bm)m &\ge \frac{bb'\bigl[(h-\bm)w - \bm t\bigr]}{h}\text,\\ \label{csplit}
m &\ge \frac{bb't}{h} + b\bigl(w + \mathbb 1_{b' = h}\bigr)\text.
\end{align}
The following are true.
\begin{enumerate}[label=(\roman*)]\tightlist
\item\label{itm:split} If~$S$ is split, then~$S$ is canonical and~$\omega =
0$. If, furthermore, $\Gamma$ is a~$B_h$~set, then~$S$ is collision-free.
\item\label{itm:cunif} We have~$\omega = 0$ if and
only if \eqref{c1} and \eqref{cunif} hold.
\item\label{itm:csplit} The semigroup~$S$ is split if and only if \eqref{c1}
and \eqref{csplit} hold.
\end{enumerate}
\end{theorem}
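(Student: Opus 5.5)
The plan is to turn every statement into explicit linear inequalities in the parameters $t$ and $w$ of~\eqref{wt}. I start by locating the sumsets. Since $\gamma_1\le\gamma_j$ and $h\gamma_{\ell-1}<c+m$, every element of $i\Gamma$ lies in $U_i=[i\gamma_1,i(c+m)/h)$. From~\eqref{wt} we have $c+m=ham/b-t$ and $\gamma_1=(c+m-w)/h$. Hence the right endpoint of $U_i$ is $iam/b-it/h$, and $U_i$ has length $iw/h<m$.

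\emph{Item~\ref{itm:cunif}.} By Definition~\ref{def:short} and~\eqref{farey}, an element $\lambda\in i\Gamma$ is short if and only if $\lambda\ge c-N_{h-i}m$, where $N_j=\floor{ja'/b'}=\ceil{ja/b}-1$. Therefore $\omega=0$ for every admissible $\Gamma$ if and only if the left endpoint $i\gamma_1$ of each $U_i$ satisfies this bound. Write $j=h-i$ and substitute $c+m=ham/b-t$. The condition for each $j=1,\dotsc,h$ becomes
\[h\phi_j m+jt-(h-j)w\ge0,\qquad\phi_j=\fpart{-ja/b}.\]
For $b\mid j$ we have $\phi_j=0$, and the binding case is the smallest such $j$, namely $j=b$; this gives exactly~\eqref{c1}. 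For the remaining $j$, the value $\phi_j$ is a positive multiple of $1/b$, determined by $j$ modulo $b$ through $ab'\equiv1\pmod b$. The task is then to show that, given~\eqref{c1}, the minimum over these $j\le h$ of the left-hand side is attained at one specific index governed by $\bm=b\bmod b'$, and that this case is~\eqref{cunif}. I expect this to be the main obstacle. I would first tabulate the constraints by residue class of $j$ modulo $b$, keeping within each class only the smallest $j$, which is the worst case because $t,w\ge0$ and the coefficient $t+w$ of $j$ is nonnegative. I would then use the Farey relations $ab'-a'b=1$ and $b+b'\le h$ to compare the surviving constraints. The expectation is that they reduce to~\eqref{c1} plus the single inequality~\eqref{cunif}. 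If the minimum does not fall at the index I expect, I would instead express the constraint as a convex combination of the $j=b$ constraint and a neighbouring one, which would explain the factor $bb'$ in~\eqref{cunif}.

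\emph{Item~\ref{itm:csplit}.} Reduce modulo $m$. The interval $U_i$ ends at $m\fpart{ia/b}-it/h$ modulo $m$. Its position on the circle is therefore the grid point $m\fpart{ia/b}$, a multiple of $m/b$, shifted left by $it/h$, with $U_0$ sitting at $0$. Two intervals can only meet if their grid points coincide or are adjacent on the circle. For coinciding points, $i'=i+b$, the shift $bt/h$ must be at least the length $iw/h$ of the earlier interval; the worst case $i=h-b$ gives~\eqref{c1} again. For adjacent grid points, the gap $m/b$ minus the accumulated shifts must be at least the interval lengths. Using the Farey relation to identify which indices are adjacent, the extreme pair involves a difference of $b'$ in index, which produces the term $bb't/h$ in~\eqref{csplit}. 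The indicator $\mathbb 1_{b'=h}$ accounts for the wrap-around against $U_0=\{0\}$ and the closed endpoint at $c+m$ when $b'=h$. This case analysis is similar in spirit to that of Item~\ref{itm:cunif}, but more mechanical.

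\emph{Item~\ref{itm:split}.} For $\omega=0$, I would check that~\eqref{csplit} implies~\eqref{cunif}, which by Item~\ref{itm:cunif} gives the claim. The alternative is a direct argument: a long element of $i\Gamma$ would fall in a residue window overlapping some $U_{i'}$ with $i'\ne i$. For canonicity, the elements of $S$ in $[c-m,c)$ are residues lying in $\bigcup U_i$. Since $h\Gamma\subset[c,c+m)$ and $U_h$ ends at $c+m$, the point $c-1\equiv c+m-1$ modulo $m$ can be shown to lie outside every $U_i$ translate that is actually hit, using disjointness. For collision-freeness, the remark after Definition~\ref{def:collision} reduces it to $\Gamma$ being a $B_h$ set and the sumsets $i\Gamma$ being disjoint modulo $m$. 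The latter holds because $i\Gamma\subset U_i$ and the intervals $U_i$ are pairwise disjoint modulo $m$.
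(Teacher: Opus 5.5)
\textbf{There is a genuine gap: the sufficiency directions of (ii) and (iii) are not proved.}

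Your reformulation is correct. The condition $h\phi_jm+jt-(h-j)w\ge0$ is $h$ times
\[L_j=(1-\phi'_j)m-w+jx,\qquad x=\frac{w+t}{h}-\frac{m}{bb'}.\]
Keeping only the smallest $j$ in each residue class modulo~$b$ is legitimate. You also correctly get the necessity of \eqref{c1} in (ii), and in (iii) the necessity of \eqref{c1} and of $m\ge bb't/h+bw$.

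The problem in (ii) is that you never show that \eqref{c1} and \eqref{cunif} imply \emph{all} the constraints $L_j\ge0$. You do not even check that \eqref{cunif} is the constraint at $j=\bm$. It is, since $\phi_{\bm}=(b-\bm)/(bb')$ when $b'>1$. The route you sketch also would not close as stated, for three reasons.
\begin{enumerate}
\item There is no fixed binding index. The binding constraint is $j=b$ when $x\le0$ and $j=\bm$ when $x\ge0$.
\item Comparing residue classes cannot be done with $t,w\ge0$ and the Farey identities alone. Take $h=b=8$, $a/b=11/8$, $a'/b'=4/3$, so $\bm=2$. The $j=1$ constraint $5m+t\ge7w$ does not follow from the $j=2$ constraint $m+t\ge3w$ together with \eqref{c1}: $w=2m$, $t=6m$ satisfies the latter two but not the former. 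The standing hypotheses $w\le m$ and $t<hm/(bb')$ have to enter.
\item The relation you invoke, $b+b'\le h$, is backwards. Consecutive $h$-Farey fractions satisfy $b+b'>h$, and that is exactly what the congruence argument needs.
\end{enumerate}

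The paper's mechanism is a case split on the sign of~$x$.
\begin{itemize}
\item If $x\le0$: from $bb'(1-\phi'_j)\equiv j$ modulo~$b'$ and $j\le h<b+b'$ you get $j\le bb'(1-\phi'_j)$. This gives $L_j\ge b(w+t)/h-w$, so \eqref{c1} suffices.
\item If $x\ge0$: from $b'(b'-\bm)(1-\phi'_j)\equiv-j$ modulo~$b'$ you get $j\ge b'-(b'-\bm)b'(1-\phi'_j)$. Then \eqref{cunif} gives $L_j\ge\bigl[b'(1-\phi'_j)-1\bigr](w-b'x)\ge0$, because $b'x\le hx<w$, which is precisely $t<hm/(bb')$.
\end{itemize}

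Part (iii) has the same gap. You claim that two intervals can meet only when their grid points $m\fpart{ia/b}$ coincide or are adjacent, but you do not prove it, and it is not automatic. The shifts $it/h$ can approach $hm/(bb')$, which exceeds the grid step $m/b$ when $b'<h$, even under \eqref{csplit}. So non-adjacent pairs must be excluded by an explicit estimate.

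That estimate is the missing idea. Suppose $\phi_j>\phi_i$. Then $bb'(\phi_j-\phi_i)$ is a positive multiple of $b'$ congruent to $i-j$ modulo~$b$. Since $i-j\le h<b+b'$, it is at least $i-j$. Combined with \eqref{csplit}, this separates the right endpoints by at least $(\phi_j-\phi_i)(m-bb't/h)\ge(\phi_j-\phi_i)bw\ge w$. Pairs with $\phi_i=\phi_j$ differ in index by a multiple of~$b$, and \eqref{c1} separates them.

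Your computations for coinciding grid points and for the extremal adjacent pair (index difference $b'$) do give the ``only if'' direction. In (i), the direct argument you mention is the right one: a long element of $i\Gamma$ would wrap into $U_h=[c+m-w,c+m)$ modulo~$m$. Your first route, via ``\eqref{csplit} implies \eqref{cunif}'', relies on the unproved sufficiency in (ii).
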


Note that, if~$b < b'$, then~$\bm = b$ and~\eqref{cunif} is simply
\eqref{c1}. If~$b = h$, then \eqref{c1} is always true and \eqref{cunif} is
equivalent to:
\begin{equation} \label{cunifh}
m\ge b'w - \frac{\bm b't}{h-\bm}\text.
\end{equation}

\begin{proof}
An element~$\lambda\in i\Gamma$ is short if and only if~$\lambda\ge c -
\floor{(h-i)a'/b'}m$
from~\eqref{short} and~\eqref{farey}. Therefore,
let~$V_i$ be the interval defined as:
\[V_i = U_i + \floor{\frac{(h-i)a'}{b'}}m - c\text.\]
Denoting by~$V_i^-$ and~$V_i^+$ the endpoints of~$V_i$, since~$i\gamma$ is the
smallest element of~$i\Gamma$, we have~$\omega = 0$ if and only if~$V_i^- \ge
0$ for $i = 0,\dotsc,h-1$.  Moreover, as~$V_i^+\le m$ and~$V_h^+ = m$, the
semigroup~$S$ is split if and only if the intervals~$V_i$ are included
in~$[0,m)$ and disjoint.

This proves that if~$S$ is split, then~$\omega = 0$. Moreover, since~$c-1$ is
in~$U_h$, it is not in the others intervals~$U_i$; as~$i\Gamma\subset U_i$,
this means that~$c-1$ is not in~$i\Gamma + \mathbb Nm$ and therefore not
in~$S$. Finally, if~$S$ is split, then the sumsets~$i\Gamma$ are disjoint
modulo~$m$; therefore, if~$\Gamma$ is a~$B_h$~set, then~$S$ is collision-free.
This proves~\ref{itm:split}.

Let us prove~\ref{itm:cunif}. From~\eqref{wt}, we have:
\[V_{h-i}^- = \left(1-\phi'_i\right)m - w + ix
\qquad\text{where}\qquad
x = \frac{w + t}{h} - \frac{m}{bb'}\text.\]
Since~$ab' = a'b + 1$, we have~$\phi'_b = \phi'_{\bm} = 1 - 1/b$.
Therefore, $V_{h-b}^-\ge0$ and~$V_{h-\bm}^-\ge0$ imply~\eqref{c1}
and~\eqref{cunif}, respectively. Conversely, assume that~\eqref{c1}
and~\eqref{cunif} hold. We distinguish two cases.
\begin{itemize}
\item If $x\le0$, we have~$bb'(1-\phi'_i)\equiv i$ modulo~$b'$ and thus~$i\le
bb'(1-\phi'_i)$. Therefore, \eqref{c1} implies~$V_{h-i}^-\ge0$.
\item If $x\ge0$, similarly, we have~$b'(b'-\bm)(1-\phi'_i)\equiv-i$
modulo~$b'$ and thus~$i\ge b'\bigl[1 - (b'-\bm)(1-\phi'_i)\bigr]$. We
get~$V_{h-i}^-\ge\bigl[b'(1-\phi'_i)-1\bigr](w - b'x)$ from~\eqref{cunif},
and so~$V_{h-i}^-\ge0$ because~$\phi'_i\le1-1/b'$ and~$b'x\le hx < w$.
\end{itemize}
This shows that~$\omega = 0$.

Let us now prove \ref{itm:csplit}. Since~$U_0 = \{0\}$, we have~$V_0 =
\{\rho\}$. For simplicity, we first assume that~$V_0 = \emptyset$ instead.
Since the intervals~$V_i$ are contained in $[0,m)$, the semigroup~$S$ is split
if and only if the~$V_i$ are disjoint. We use the following form
for~$V_{h-i}$:
\[V_{h-i} = \left[\phi_im - w + \frac{i(t+w)}{h},\phi_im +
\frac{it}{h}\right)\text.\]
If~$S$ is split, then~$V_b^-\ge0$ gives~\eqref{c1} and~$V_{b'}^+ \le V_h^- = m
- w$, as~$\phi_{b'} = 1 - 1/b$, gives:
\begin{equation}\label{csplit0}
m\ge \frac{bb't}{h} + bw\text.
\end{equation}
Conversely, assume~\eqref{c1} and~\eqref{csplit0}. Let~$i\ne j$. We
distinguish two cases.
\begin{itemize}
\item If~$\phi_i = \phi_j$, assume that~$j > i$ without loss of generality.
This entails that~$j-i\ge b$. Using~\eqref{c1}, we find~$V_{h-j}^- -
V_{h-i}^-\ge w$, which shows that~$V_{h-i}$ and~$V_{h-j}$ are disjoint.
\item If~$\phi_i\ne\phi_j$, assume that~$\phi_j > \phi_i$ without loss of
generality. We use~\eqref{csplit0} and note that~$bb'(\phi_j - \phi_i)$ is
congruent to~$i-j$ modulo~$b$ and~$0$ modulo~$b'$; since~$i-j\le h < b+b'$,
this shows that~$bb'(\phi_j - \phi_i)\ge i-j$. This implies that~$V_{h-j}^+ -
V_{h-i}^+\ge (\phi_j-\phi_i)bw\ge w$, so~$V_{h-i}$ and~$V_{h-j}$ are disjoint.
\end{itemize}
This reasoning also applies with~$j = 0$ and both~$\phi_0 = 0$ and~$\phi_0 =
1$, so~$V_{h-i}$ intersects neither~$[-w,0)$ nor~$[m-w,m)$. This shows
that~$S$ is split.

With the actual interval~$V_0 = \{V_0^+\}$, we have to be careful that~$V_0$
does not intersect the interval directly to its right, which is~$V_{b'}$.
If~$b' < h$, then the width of that interval is less than~$w$, so the
criterion does not change. If~$b' = h$, that interval is~$V_h$, with
width~$w$. We thus require that~$V_0^+ - V_h^+$ be greater than~$w$, or at
least~$w+1$ since it is an integer. This is equivalent to~\eqref{csplit}.
\end{proof}

\subsection{The semigroup \texorpdfstring{$S(h,a/b,\Delta,\tau,m)$}{S(h, a/b,
Δ, τ, m)}}

\begin{definition} \label{def:S}
Let~$h\ge3$ be an integer, $a/b$ be an \h-Farey fraction and~$a'/b'$ its
predecessor, $\Delta\subset\mathbb N$ be a finite set and~$\tau$ and~$m$ be
integers. Define the semigroup~$S(h, a/b, \Delta, \tau, m) = \langle
m,\Gamma\rangle_c$, where:
\begin{equation} \label{cGamma}
c = \floor{\frac{\mathstrut ahm}{b}} - m - \tau\qquad\text{and}\qquad
\Gamma = \floor{\frac{\mathstrut c+m-1}{h}} - \Delta\text.
\end{equation}
Moreover, let~$\delta_1 = \max\Delta$ and let~$\hat S(h,a/b,\Delta) =
S(h,a/b,\Delta,\hat\tau,\hat m)$, where:
\begin{equation} \label{taumhat}
\hat\tau = \floor{\frac{(h-b)(h\delta_1+1)}{b}}\qquad\text{and}\qquad
\hat m = \bigl[(b+b')h - bb'\bigr]\delta_1 + b + b'\text.
\end{equation}
\end{definition}

Every \h-regular semigroup with Farey fraction~$a/b$ can be constructed in
this way, as such a semigroup satisfies~$\sfrac{c+m}{hm}\le a/b$
and~$h\gamma_{\ell-1} < c+m$.
The parameters~$w$ and~$t$ defined above can be
computed as:
\begin{equation} \label{wt2}
w = h\delta_1 + 1 + (c + m - 1\bmod h)
\qquad\text{and}\qquad
t = \tau + \fpart{\frac{ahm}{b}}\text.
\end{equation}
If~$b$ divides~$h$, then~$\rho = \tau$. Moreover, as~$ab' = a'b + 1$, we
have~$a\hat m \equiv h\delta_1 + 1$ modulo~$b$. Therefore, we have, with
obvious notation:
\begin{equation} \label{wthat}
\hat w = h\delta_1 + 1
\qquad\text{and}\qquad
\hat t = \frac{(h-b)(h\delta_1+1)}{b}\text.
\end{equation}
Thus, we can apply the criteria above to the
semigroup~$S(h,a/b,\Delta,\tau,m)$ to determine if it is split and if it
has~$\omega = 0$.

\begin{lemma} \label{lem:Shat}
The semigroup~$\hat S(h,a/b,\Delta)$ is split. If~$S(h,a/b,\Delta,\tau,m)$ is
split, then~$t\ge\hat t$ and~$m\ge\hat m$.
\end{lemma}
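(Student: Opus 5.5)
Both claims reduce to Theorem~\ref{thm:criteria}\ref{itm:csplit}, which says that an $h$-regular semigroup with Farey interval $(a'/b',a/b]$ is split exactly when \eqref{c1} and \eqref{csplit} hold.

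\emph{The hat semigroup is split.} I plan to plug in the values $\hat w = h\delta_1+1$ and $\hat t = (h-b)(h\delta_1+1)/b$ from \eqref{wthat}.
\begin{itemize}
\item Condition \eqref{c1} reads $\hat t \ge (h-b)\hat w/b$, and with these values it is an equality.
\item For \eqref{csplit}, note that $\hat w + \hat t = h(h\delta_1+1)/b$, so $bb'\hat t/h = b'(h-b)(h\delta_1+1)/h$. Plugging in, \eqref{csplit} becomes
\[\hat m \ge \frac{b'(h-b)(h\delta_1+1)}{h} + b\bigl(h\delta_1+1+\mathbb 1_{b'=h}\bigr)\text.\]
Expanding $\hat m = [(b+b')h - bb']\delta_1 + b + b'$, the $\delta_1$ terms match exactly. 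What remains to check is
\[b + b' \ge \frac{b'(h-b)}{h} + b + b\,\mathbb 1_{b'=h}\text,\]
that is, $bb'/h \ge b\,\mathbb 1_{b'=h}$. This holds with equality when $b'=h$ and trivially otherwise.
\end{itemize}
Before this, I must confirm the standing hypotheses $\hat w \le m$ and $0\le \hat t < hm/(bb')$, so that $\hat S$ is $h$-regular with the right Farey interval. I also need \eqref{wthat} itself to be correct, which rests on $a\hat m\equiv h\delta_1+1 \pmod b$ and on $c+m-1 \bmod h$ vanishing, as the paper asserts. The inequality $\hat t < h\hat m/(bb')$ follows from $bb'\hat t/h < \hat m$, which is essentially contained in the computation above. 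The bound $\hat w\le \hat m$ is immediate since $(b+b')h - bb' \ge h$.

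\emph{Minimality.} Now suppose $S = S(h,a/b,\Delta,\tau,m)$ is split. By \eqref{wt2} we have $w \ge h\delta_1+1 = \hat w$. Condition \eqref{c1} then gives $t \ge (h-b)w/b \ge (h-b)\hat w/b = \hat t$. Feeding this into \eqref{csplit} and reusing the algebra above gives
\[m \ge \frac{bb't}{h} + b\bigl(w+\mathbb 1_{b'=h}\bigr) \ge \frac{bb'\hat t}{h} + b\bigl(\hat w + \mathbb 1_{b'=h}\bigr)\text.\]
The right-hand side is $\bigl[(b+b')h-bb'\bigr]\delta_1 + b' - bb'/h + b + b\,\mathbb 1_{b'=h}$. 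When $b'=h$ this equals $\hat m$ exactly. When $b'<h$ it is $\hat m - bb'/h$, and since $m$ is an integer and $0<bb'/h<1$ we still need an extra argument.

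\emph{Main obstacle.} This last integrality point is the step I expect to be hardest. Since $b+b'\le h$ when $b'<h$, we do have $bb'/h<1$, so $m > \hat m - 1$ and hence $m\ge\hat m$ whenever the right-hand side is a non-integer exceeding $\hat m-1$. That settles it.

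One remaining subtlety is that the bound on $m$ must hold for arbitrary $\tau$, where $t$ contains the fractional part $\fpart{ahm/b}$. The inequalities above involve only $t$ and $w$ together with the bounds $t\ge\hat t$ and $w\ge\hat w$, so I do not expect this to cause trouble.
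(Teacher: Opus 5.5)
\textbf{The gap.} Your verification that $\hat S(h,a/b,\Delta)$ satisfies \eqref{c1} and \eqref{csplit} is correct, and so is your derivation of $t\ge\hat t$. Both coincide with the paper, which rewrites \eqref{csplit} as $m\ge\hat m+b\mathbb 1_{b'=h}-bb'/h+bb'(t-\hat t)/h+b(w-\hat w)$, exactly your algebra. Your argument also works when $b'=h$. It fails in the last step, $m\ge\hat m$, when $b'<h$.

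Consecutive $h$-Farey fractions satisfy $b+b'>h$, not $b+b'\le h$. The mediant $(a+a')/(b+b')$ lies strictly between them, so its denominator must exceed $h$. This is the form used in the proof of Theorem~\ref{thm:criteria} ($i-j\le h<b+b'$); the introduction has the inequality reversed. Hence $bb'\ge b+b'-1\ge h$, so $bb'/h\ge1$ always. Your bound $m\ge\hat m-bb'/h$ therefore never yields $m\ge\hat m$ when $b'<h$.

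For example, take $h=3$, Farey interval $(3/2,5/3]$ and $\Delta=\{0,1\}$. Then $\hat S=\langle14,22,23\rangle_{56}$ and $\hat m=14$, but you only get $m\ge12$. Your closing remark is also backwards: the arithmetic of $t$ and $w$ is exactly what is needed here.

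\textbf{The missing idea.} You need the congruence constraint tying $w$, $t$ and $m$ together. Split into cases.
\begin{itemize}
\item If $w>\hat w$, the extra term $b(w-\hat w)\ge b\ge bb'/h$ already gives $m\ge\hat m$.
\item If $w=\hat w$, then $c+m=1+hN$ for some integer $N$. From $c+m=ahm/b-t$ you get $b(t-\hat t)=h\bigl(am-h\delta_1-1-b(N-\delta_1)\bigr)$. So $t-\hat t$ is a nonnegative multiple of $h/b$.
\item If $w=\hat w$ and $t>\hat t$, then $t-\hat t\ge h/b$. The term $bb'(t-\hat t)/h\ge b'\ge bb'/h$ suffices.
\item If $w=\hat w$ and $t=\hat t$, the same identity gives $am\equiv h\delta_1+1\equiv a\hat m$ modulo~$b$. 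Since $\gcd(a,b)=1$, this gives $m\equiv\hat m$ modulo~$b$. Because $b\mathbb 1_{b'=h}-bb'/h>-b$, you have $m>\hat m-b$, and the congruence then forces $m\ge\hat m$.
\end{itemize}
This case analysis is how the paper concludes.
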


\begin{proof}
We rewrite the inequality~\eqref{csplit} into:
\begin{equation} \label{csplit2}
m \ge \hat m + b\mathbb 1_{b' = h} - \frac{bb'}{h} + \frac{bb'(t-\hat t)}{h}
+ b(w - \hat w)\text.
\end{equation}
Since~$\hat S$ satisfies~\eqref{c1} and~\eqref{csplit2}, it is split.
If~$S$ is split, since~$w\ge\hat w$, we get~$t\ge\hat t$ from~\eqref{c1}.
From \eqref{csplit2}, if~$t > \hat t$ or~$w > \hat w$, we get~$m\ge\hat m$.
If~$t = \hat t$ and~$w = \hat w$, we have~$c+m\equiv1$ modulo~$h$; since~$c+m
= ahm/b - t$ and~$a$ and~$b$ are coprime, this shows that~$m\equiv\hat m$
modulo~$b$.  As~$b\mathbb 1_{b'=h} - bb'/h > -b$, we get~$m\ge\hat m$.
\end{proof}

If~$S(h,a/b,\Delta,\tau,m)$ is split, it is collision-free as soon as~$\Delta$
is a~$B_h$~set. A simple choice is~$\{0,1,h+1,h^2+h+1,\dotsc\}$, but tighter
sets exist if~$\ell\ge5$~\cite{obryant}. Non-split collision-free semigroups
exist, but are more difficult to construct in general.

\section{Constructing Eliahou semigroups} \label{sec:eliahou}

The goal of this section is to find for which values of~$h$, $a/b$ and~$\ell$
there exist Eliahou \h-regular collision-free semigroups and how to construct
them.

\subsection{Inflating \texorpdfstring{$h$}{h}-regular semigroups}

We first show that the answer depends only on the value of~$a/b$
modulo~$1$. Indeed, let $S = S(h,a/b,\Delta,\tau,m) = \langle
m,\Gamma\rangle_c$ be the semigroup defined in the last section and define the
following semigroup:
\begin{equation} \label{S+}
S^+ = S(h,a/b + 1, \Delta,\tau,m) = \langle m,\Gamma + m\rangle_{c+hm}\text.
\end{equation}
Several ways to enlarge a given semigroup appear in the
literature~\cite{rosales,barucci}, but this is different from both of them.

\begin{lemma} \label{lem:S+}
The semigroup~$S^+$ is collision-free if and only if~$S$ is collision-free. In
this case, it has the same Eliahou number.
\end{lemma}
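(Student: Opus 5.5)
Throughout, write $S^+=\langle m,\Gamma^+\rangle_{c^+}$. I would first check that the two descriptions in~\eqref{S+} agree. From~\eqref{cGamma} with $a/b+1$ in place of $a/b$, we get $c^+=\floor{ahm/b+hm}-m-\tau=c+hm$. Then $\floor{(c^++m-1)/h}=\floor{(c+m-1)/h}+m$, so $\Gamma^+=\Gamma+m$.

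\textbf{Step 1: $h$-regularity.} The parameter $w=c+m-h\gamma_1$ is unchanged, since $c$ gains $hm$ and $h\gamma_1$ gains $hm$. Likewise $h\Gamma^+=h\Gamma+hm$ and $[c^+,c^++m)=[c,c+m)+hm$. Hence $S^+$ is $h$-regular if and only if $S$ is. I will work under $h$-regularity, which is the setting in which collision-freeness and the Eliahou formulas are used.

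\textbf{Step 2: collision-freeness.} By the remark after Definition~\ref{def:collision}, an $h$-regular semigroup is collision-free if and only if $\Gamma$ is a $B_h$ set and the sumsets $i\Gamma$, $0\le i\le h$, are pairwise disjoint modulo~$m$. Both conditions transfer directly to $\Gamma+m$.
\begin{itemize}
\item Sums of $i$ elements of $\Gamma+m$ are sums of $i$ elements of $\Gamma$ shifted by $im$, so $\Gamma+m$ is $B_h$ exactly when $\Gamma$ is.
\item We have $i(\Gamma+m)\equiv i\Gamma \pmod m$, so the disjointness condition is the same for both.
\end{itemize}
This gives the equivalence. The step I expect to need the most care is justifying this reduction when $h$-regularity is not assumed outright. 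If needed, I would argue directly that below $c+m$ every representation uses at most $h$ elements of $\Gamma$. Then the map $x=jm+\sum_{i}\gamma\mapsto x+im$ gives a bijection between representations, for each fixed number $i$ of $\Gamma$-summands, below the respective thresholds.

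\textbf{Step 3: Eliahou number.} Assume both semigroups are collision-free and use $E=k\ell-qs+\rho$ from~\eqref{Es}.
\begin{itemize}
\item $\ell$ and $s=\smchoose{\ell}{h}$ are unchanged.
\item Since $c^+=c+hm=(q+h)m-\rho$, we have $q^+=q+h$ and $\rho^+=\rho$.
\item For the rank, each $\lambda\in i\Gamma$ corresponds to $\lambda+im\in i\Gamma^+$, which determines $\ceil{(c+hm-\lambda-im)/m}=\ceil{(c-\lambda)/m}+h-i$ left elements. Elements with $i=h$ lie in the threshold interval and contribute~$0$ in both cases. Hence $k^+=k+\sum_{i=0}^{h-1}(h-i)s_i$.
\end{itemize}
Therefore
\[E^+-E=\ell\sum_{i=0}^{h-1}(h-i)s_i-hs=0\]
by~\eqref{chu}. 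Equivalently, one can check that $\omega$ is preserved and the Farey data shift by integers, so the $\phi_i,\phi'_i$ and hence $E_0$ in Theorem~\ref{thm:E} are unchanged.
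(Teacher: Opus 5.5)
Your proof is correct. Step~2 is the paper's argument spelled out: the paper simply notes that $S$ and $S^+$ have the same left generators modulo~$m$.

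Step~3 takes a somewhat different route. The paper reads the equality of Eliahou numbers off Theorem~\ref{thm:E}. There $E=E_0+\ell\omega+\rho$ depends only on $h$, $\ell$, the $\phi_i,\phi'_i$, $\rho$ and $\omega$, and the paper asserts that all of these are unchanged when $a/b$ becomes $a/b+1$. You instead compute $E^+-E$ directly from $E=k\ell-qs+\rho$: $q$ grows by $h$, each $\lambda\in i\Gamma$ gains exactly $h-i$ left elements, and \eqref{chu} makes the two changes cancel.

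Your computation is self-contained. It also sidesteps the one point the paper leaves implicit, namely that $\omega$ is preserved. That point is really the same calculation, since $\lceil (c-\lambda)/m\rceil$ and both bounds in \eqref{short} shift by $h-i$. The paper's version is a one-liner once Theorem~\ref{thm:E} is available, which is the route you mention in your closing remark.

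One small caution concerns your fallback remark in Step~2. For $i<h$, the map $x\mapsto x+im$ is not a bijection between all representations below the two thresholds. $S^+$ has $h-i$ more multiples of~$m$ per $\lambda$, which is exactly what you count in Step~3. What is actually preserved is the residue modulo~$m$ of every multiset sum of at most $h$ elements of~$\Gamma$. Also, the claim that at most $h$ summands occur below $c+m$ itself uses $h\gamma_1\ge c$, so the fallback does not remove the $h$-regularity hypothesis. This is harmless here, because $w\le m$ is a standing assumption from Section~\ref{sec:split} onward.
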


The result does not hold if~$S$ is not collision-free: for example, the
semigroup~$S(3, 5/3, \{0,2\}, 0, 8) = \langle8,11,13\rangle_{32}$ has~$E =
11$, but~$S^+ = \langle8,19,21\rangle_{56}$ has~$E = 17$.

\begin{proof}
As both semigroups have the same left generators modulo~$m$, one is
collision-free if and only if the other is. Moreover, both have the same~$h$,
$\ell$, $\phi_i$ and~$\phi'_i$, $\rho$ and~$\omega$, so they have the
same~$E$ if they are collision-free.
\end{proof}

Table~\ref{tab:results} shows in which classes the 104569 semigroups we
found with~$c\le400$ and what their parameters are. To keep it legible, we
excluded the 1886 semigroups with~$a/b > 2$, which all fall under the
application of Lemma~\ref{lem:S+}.

\begin{table}[ht]
\begin{center}
\setlength\tabcolsep{3.5pt}%
\begin{tabular}{l>{\raggedleft\arraybackslash}p{\widthof{34099}}*3{>{\raggedleft\arraybackslash}p{\widthof{248}}}*4{>{\raggedleft\arraybackslash}p{\widthof{207}}}*6{>{\raggedleft\arraybackslash}p{\widthof{25}}}>{\raggedleft\arraybackslash}p{\widthof{$\dfrac97$}}}
$h$&\multicolumn1c3&\multicolumn3c4&\multicolumn4c5&\multicolumn6c6&\multicolumn1c7\\
\cmidrule(r){1-1}\cmidrule(lr){2-2}\cmidrule(lr){3-5}\cmidrule(lr){6-9}\cmidrule(lr){10-15}\cmidrule(lr){16-16}
$\dfrac ab$\vphantom{$\Biggl($}&
\titlefrac53&
\titlefrac32&
\titlefrac53&
\titlefrac74&
\titlefrac75&
\titlefrac85&
\titlefrac74&
\titlefrac95&
\titlefrac43&
\titlefrac32&
\titlefrac85&
\titlefrac53&
\titlefrac74&
\titlefrac95&
\titlefrac97\\\midrule
CF, split&%
\sgbg{3,4}{9226}&\sgbg3{32}&\sgbg3{1}&\sgbg3{306}&\sgbg3{91}&\sgbg3{50}&\sgbg3{6}&\sgbg3{25}&\sgbg3{25}&\sgbg3{12}&\sgbg3{1}&\sgbg3{2}&\sgbg3{3}\\
CF, $\omega = 0$&%
\sgbg{3,4,5}{26383}&&&\sgbg{3,4}{248}&\sgbg3{158}&\sgbg3{207}&\sgbg3{1}&\sgbg3{13}&&&&&&&\sgbg3{1}\\
CF, $\omega\ne0$&\sgbg5{5471}&\sgbg{4}{18}&&\sgbg{4}{11}&&&&\sgbg{3}{6}&&&&\sgbg{3}{1}&&\sgbg{3}{1}\\
collisions&\sgbg{5,6}{42938}&&&\sgbg{4}{112}\\
not \h-regular&\sgbg{5,6}{17276}&&&\sgbg{4}{58}
\end{tabular}\\\bigskip
\begin{tabular}{*3{p{5mm}p{15mm}}p{5mm}l}
\sgbg3{}&$\ell = 3$&\sgbg4{}&$\ell = 4$&\sgbg5{}&$\ell = 5$&\sgbg6{}&$\ell = 6$
\end{tabular}
\end{center}
\caption{The repartition of the semigroups with~$c\le400$ and~$a/b < 2$,
sorted according to~$h$, $a/b$ and membership in the classes defined in the
paper (CF means collision-free and ``$\omega = 0$'' means $\omega = 0$ but not
split). Non \h-regular semigroups are all nearly \h-regular with the
parameters given. The background color(s) show the number of left generators
of the semigroups.}
\label{tab:results}
\end{table}

We distinguish three regimes: $b = h$, $b\mid h$ and~$b\nmid h$, which as the
table suggests are in decreasing order of frequency. We conclude the section
with a discussion of non-\h-regular or non-collision-free semigroups. We
reference semigroups already published in the literature where applicable.

\subsection{The case \texorpdfstring{$b = h$}{b = h}}

The case~$b = h$ is simplest, thanks to the following result on the
quantity~$E_0$ defined in Theorem~\ref{thm:E}.

\begin{lemma} \label{lem:E0neg}
If~$\fpart{a/b}\ne1/h$ and~$\ell\ge3$, then~$E_0 < 0$.
\end{lemma}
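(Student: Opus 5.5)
The plan is to use form \eqref{Ea} of Theorem~\ref{thm:E}, since its sum has only $F = \floor{h\phi_1}$ terms, with $\phi_1 = \fpart{-a/b}$. First I would dispose of the degenerate case $b = 1$. Then $\phi_i = 0$ for every $i$, so $F = 0$ and \eqref{Ea} reduces to $E_0 = -(h-1)s_h < 0$, because $h\ge3$ and $s_h>0$.

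Now assume $b\ge2$, so that $\phi_1 = 1 - \fpart{a/b}$.
\begin{enumerate}
\item Since $\fpart{a/b}\ge 1/b\ge 1/h$, we have $\phi_1\le 1-1/h$. Equality holds exactly when $\fpart{a/b} = 1/h$, which the hypothesis excludes. So $\phi_1 < 1-1/h$ and hence $F\le h-2$.
\item Write $n_j = h - \ceil{j/\phi_1}$ for $1\le j\le F$. From $\phi_1\ge F/h$ we get $\ceil{j/\phi_1}\le\ceil{jh/F}$, so $n_j\ge h-\ceil{jh/F}$.
\item Because $\smchoose{\ell}{n}$ is increasing in $n$, \eqref{Ea} gives
\[E_0 \le -(h-1)s_h + Fs - \ell\sum_{j=1}^{F}\mchoose{\ell}{h-\ceil{jh/F}}\text.\]
This bound depends only on $h$, $F$ and $\ell$, which removes all dependence on the particular Farey interval.
\end{enumerate}

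Next I would express everything relative to $s = \smchoose{\ell}{h}$, using
\[s_h = \mchoose{\ell-1}{h} = \frac{\ell-1}{\ell+h-1}\,s
\qquad\text{and}\qquad
\ell\mchoose{\ell}{n} = \ell\,\frac{\binom{\ell+n-1}{n}}{\binom{\ell+h-1}{h}}\,s\text.\]
The claim then becomes, for $1\le F\le h-2$ and $\ell\ge3$,
\[(h-1)\frac{\ell-1}{\ell+h-1} + \ell\sum_{j=1}^{F}\frac{\binom{\ell+n_j-1}{n_j}}{\binom{\ell+h-1}{h}} > F\text.\]
For the term $j=1$ we have $n_1 = h-\ceil{h/F}\ge h-2$ when $F\ge h/2$. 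In that range $\ell\smchoose{\ell}{h-2}/s = \frac{\ell\,h(h-1)}{(\ell+h-1)(\ell+h-2)}$ is already large. I would pair up contributions to cover the deficit: each unit of $F$ must be paid by either the $s_h$ term or one summand.

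For small $F$, say $F\le 1$, the $s_h$ term alone suffices once the ratio $(h-1)(\ell-1)/(\ell+h-1)$ exceeds $1$. For $\ell\ge3$ and $h\ge3$ this ratio is at least $6/5>1$ (at $\ell=h=3$), and it increases in both $\ell$ and $h$. For $F=2$ the ratio is at least $2$ once $\ell$ and $h$ are both at least $4$, but it stays below $2$ when $\ell=3$ or $h=3$. Those cases must be covered by the summand $\ell\smchoose{\ell}{n_1}/s$, where $F=2$ gives $n_1 = h-\ceil{h/2}$, roughly $h/2$.

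The main obstacle is the general middle range of $F$. There I would argue by induction on $F$: show that raising $F$ by one raises the left side by at least $1$. Two sources contribute to that increase. The new summand $j=F$ has $n_F = 0$ and contributes at least $\ell/s$. More importantly, every existing $n_j$ weakly increases, because $\ceil{jh/F}$ decreases as $F$ grows. If this monotone step proves too weak, the fallback is a direct estimate by the ratio test. Using $\binom{\ell+n-1}{n}/\binom{\ell+n}{n+1} = (n+1)/(\ell+n)$, I would show that $\ell\smchoose{\ell}{n}\ge s$ whenever $n\ge h-1-\floor{(h-2)/(\ell-1)}$, and then count how many $n_j$ meet that threshold, which is about $F-F/h$. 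The remaining deficit of about $F/h$ would then be absorbed by the $s_h$ term, using $(h-1)(\ell-1)/(\ell+h-1)\ge 1$ for $\ell,h\ge3$.
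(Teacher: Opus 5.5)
Your steps 1--3 are correct. Replacing $\phi_1$ by $F/h$, with $F=\lfloor h\phi_1\rfloor\le h-2$, is exactly the monotonicity reduction the paper uses when $b\nmid h$, and it is an equality when $b\mid h$.

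The gap is the remaining inequality
$G(F):=-(h-1)s_h+Fs-\ell\sum_{j=1}^{F}\smchoose{\ell}{h-\lceil jh/F\rceil}<0$ for $1\le F\le h-2$. You never prove it, and the estimates you sketch for it are false:

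\begin{itemize}
\item At $h=\ell=3$ the ratio $(h-1)(\ell-1)/(\ell+h-1)$ is $4/5$, not $6/5$. There $-(h-1)s_h+s=-8+10=2>0$, and $E_0=-1$ (the semigroup $\langle14,22,23\rangle_{56}$) holds only because of the summand $\ell\smchoose{\ell}{0}=\ell$. The ratio is also exactly $1$ at $(h,\ell)=(3,4)$ and $(4,3)$, so the $s_h$ term alone does not settle $F=1$.
\item The induction step ``raising $F$ by one raises the left side by at least $1$'' fails. Since $F\le h-2$ forces $\lceil jh/F\rceil\ge2$, every $n_j\le h-2$. Each summand is then $O(h^2/\ell)$ relative to $s$, so for large $\ell$ the left side is almost constant in $F$. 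For $h=5$, $\ell=100$ it is about $3.808$ at $F=1$ and $3.813$ at $F=2$.
\item The fallback fails too. For $\ell\ge h$ your threshold is $n\ge h-1$, which no $n_j$ reaches. For small $\ell$ the threshold claim is itself wrong: $h=10$, $\ell=3$, $n=5$ gives $\ell\smchoose{\ell}{n}=63<66=s$.
\item Since $G(1)=0$ when $\ell=2$, any estimate must use $\ell\ge3$ sharply, and yours do not visibly do so.
\end{itemize}

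The missing idea is to undo the summation by parts rather than estimate binomial coefficients. The Abel summation that turns \eqref{E} into \eqref{Ea} is an identity in $\phi_1$. So your bound is just \eqref{E} evaluated at $\phi_1=F/h$, where $\phi_h=0$:
$G(F)=-(h-1)s_h+\ell\sum_{i=0}^{h-1}\fpart{-iF/h}\,s_i$.

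Sorted increasingly, the values $\fpart{-iF/h}$ are termwise at most $i/h$: each multiple of $d/h$ appears $d=\gcd(F,h)$ times. Because $F\ne h-1$, the unsorted sequence $(\fpart{-iF/h})_i$ is not $(i/h)_i$. Since $(s_i)$ is strictly increasing for $\ell\ge3$, the rearrangement inequality and \eqref{isi} give
$\ell\sum_i\fpart{-iF/h}s_i<\ell\sum_i (i/h)s_i=(h-1)s_h$, that is, $G(F)<0$.

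This is how the paper proves it: rearrangement on form \eqref{E} for $b\mid h$, then the same monotonicity reduction you found for $b\nmid h$.
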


The conditions of this lemma explain why the smallest Eliahou semigroups
have~$h = 3$, $a/b = 5/3$ and~$\ell = 3$. The fact that semigroups
with~$\ell = 2$ or~$\fpart{a/b} = 1/h$ have~$E_0\ge 0$ was already shown by
Eliahou, as the latter semigroups exhibit ``true grading''~\cite{eliahou}.

\begin{proof}
Assume first that~$b = h$, so~$\phi_h = 0$. Then $(\phi_{h-i})$ is a
permutation of~$(i/h)$ for $i = 0,\dotsc,h-1$. As~$\ell\ge3$, the sequence
$(s_i)$ is increasing and as~$\fpart{a/b}\ne1/h$, the permutation is not the
identity. Therefore, by the rearrangement inequality, the sum
of~$\phi_{h-i}s_i$ is bounded by the sum of~$is_i/h$, which is given
by~\eqref{isi}.

If~$b$ divides~$h$, the sequence~$(\phi_{h-i})$ consists of the
numbers~$i/b$ for $0\le i < b$, each appearing $h/b$ times and is thus bounded
by a permutation of~$(i/h)$, which gives the result.

Finally, if~$b$ does not divide~$h$, let $\psi_1 = \floor{h\phi_1}/h$. We
have~$\floor{h\psi_1} = \floor{h\phi_1}$ and~$\psi_1 < \phi_1 < 1 - 1/h$, so
by \eqref{Ea}, the value~$E_0$ for~$\phi_1$ is lower than the one
for~$\psi_1$, which is negative from the above reasoning.
\end{proof}

The criterion~\eqref{c1} shows that we can only have simultaneously~$\rho = 0$
and~$\omega = 0$ if~$b = h$. From Theorem~\ref{thm:criteria}, the
semigroup~$S(h, a/h, \Delta, 0, m)$ is split is~$m$ is large enough and thus
has~$E < 0$ as soon as~$a\nequiv1\bmod h$ (equivalently, $b'\ne1$)
and~$\ell\ge3$.

This includes Delgado's semigroups~\cite{delgado}, of the
form~$S\bigl(h,\sfrac{2h-1}{h}, \{0,1\}, 0, m\bigr)$, and Eliahou and
Fromentin's semigroups~\cite{eliahou-fromentin}, of the form~$S(3, 5/3,
\Delta, 0, m)$, but using any reduced fraction~$a/h$ for~$a\nequiv1$
modulo~$h$ and any~$\ell\ge3$ gives us a much wider variety
(Figure~\ref{fig:bish}).

\begin{figure}[ht]%
\begin{tikzpicture}[semigroups]
\sgdraw{50,68,69}{300}
\begin{scope}[yshift=-5cm]
\sgdraw{50,78,79}{350}
\end{scope}
\begin{scope}[yshift=-10cm]
\sgdraw{50,88,89}{400}
\end{scope}
\end{tikzpicture}\\[3mm]
\begin{tikzpicture}[semigroups]
\sglegend{5}
\end{tikzpicture}
\caption[]{Three $5$-regular Eliahou semigroups.

Top: the semigroup~$S(5, 7/5, \{0,1\}, 0, 50) =
\langle50,68,69\rangle_{300}$, with~$E = -3$.

Middle: the semigroup~$S(5, 8/5, \{0,1\}, 0, 50) =
\langle50,78,79\rangle_{350}$, with~$E = -3$.

Bottom: the semigroup~$S(5, 9/5, \{0,1\}, 0, 50) =
\langle50,88,89\rangle_{400}$, with~$E = -6$, one of Delgado's semigroups.}
\label{fig:bish}
\end{figure}

\subsection{The case \texorpdfstring{$b\mid h$}{b | h}}

If~$b$ divides~$h$, then the inequality~\eqref{c1} shows that we cannot
have~$\rho = \omega = 0$.
In this case, Eliahou collision-free
semigroups may or may not exist. We show that, if they do, there are always
infinitely many, but possibly only for some congruences of~$m$ modulo~$b$.
Let~$S^*$ be the semigroup defined by:
\begin{equation} \label{S*}
S^* = S(h,a/b,\Delta,\rho,m+b) = \langle m+b, \Gamma+a\rangle_{c+ah-b}\text.
\end{equation}
Let~$S^{*n}$ be the semigroup obtained by applying this transformation~$n$
times.

\begin{lemma} \label{lem:S*}
Assume that~$b$ divides~$h$. If~$S$ is split, then~$S^*$ is split and has
Eliahou number~$E$. If~$S$ is collision-free, then~$S^{*n}$ is collision-free
and has Eliahou number at most~$E$ if~$n$ is large enough.
\end{lemma}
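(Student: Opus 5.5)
The plan is to compute the parameters of $S^*$ explicitly and check that everything entering Theorem~\ref{thm:criteria} and Theorem~\ref{thm:E} is unchanged except the multiplicity, which grows by~$b$.

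\emph{Parameters of $S^*$.} Since $b\mid h$, we have $\phi_h=0$, $ahm/b\in\mathbb Z$ and $\rho=\tau=t$. Then $c^*=ah(m+b)/b-(m+b)-\rho=c+ah-b$ and $c^*+m^*=c+m+ah$, so $\Gamma^*=\Gamma+a$. By~\eqref{wt}, $w^*=c+m+ah-h(\gamma_1+a)=w$ and $t^*=ah(m+b)/b-(m+b)-c^*=t$. The quantities $h$, $a/b$, $a'/b'$, $\bm$, $\ell$ and $\Delta$ are also unchanged. Only $m$ increases, to $m+b$.

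\emph{First claim.} Condition~\eqref{c1} involves only $h,b,w,t$, so it still holds. The right-hand side of~\eqref{csplit} does not depend on $m$, so increasing $m$ preserves~\eqref{csplit}. By Theorem~\ref{thm:criteria}\ref{itm:csplit}, $S^*$ is split. To compare Eliahou numbers without assuming $\Gamma$ is a $B_h$~set, I would compute $E=k\ell-qs+\rho$ directly rather than through Theorem~\ref{thm:E}.
\begin{itemize}
\item Splitness makes the sumsets $i\Gamma$ disjoint modulo $m$, so $s=\sum_i\abs{i\Gamma}=\sum_i\abs{i\Delta}$, which is the same for $S$ and $S^*$.
\item By Theorem~\ref{thm:criteria}\ref{itm:split}, $\omega=0$, so each distinct element of $i\Gamma$ contributes exactly $\floor{(h-i)a'/b'}$ left elements. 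Hence $k=\sum_i\floor{(h-i)a'/b'}\abs{i\Delta}$, also unchanged.
\item Finally $q=ha/b-1$, and $\ell$ and $\rho$ agree.
\end{itemize}
Therefore $E^*=E$.

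\emph{Second claim.} Iterating, $S^{*n}=S(h,a/b,\Delta,\rho,m+nb)$ has the same $w$ and $t$ with multiplicity $m+nb$. For $n$ large, \eqref{csplit} holds, and \eqref{c1} holds as well: if $S$ is not split, I would verify~\eqref{c1} by noting that $S$ is collision-free and by a direct check of $V_{h-b}^-$, which does not depend on $m$ once written via $w$ and $t$. Hence $S^{*n}$ is split. Since $S$ is collision-free, $\Gamma$ is a $B_h$~set, and so is its translate $\Gamma+na$. By Theorem~\ref{thm:criteria}\ref{itm:split}, $S^{*n}$ is collision-free with $\omega=0$. Theorem~\ref{thm:E} applies to both semigroups with the same $E_0$ (same $h,\ell,\phi_i$) and the same $\rho$, giving $E(S^{*n})=E_0+\rho\le E_0+\ell\omega+\rho=E$.

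The main obstacle is the gap between ``large $n$'' and collision-freeness of \emph{every} $S^{*n}$, together with confirming~\eqref{c1} when $S$ is not split. For the first, I would try to show that a congruence $\sum\gamma_{i_u}+na\,p\equiv\sum\gamma_{j_v}+na\,p'\pmod{m+nb}$ among sums of length $p\ne p'\le h$ lifts to a collision for $S$. The idea is to use that the sums lie in the prescribed windows $[p\gamma_1,p(c+m)/h)$, whose positions relative to multiples of $m$ move linearly in $n$. If that fails, I would settle for the ``large $n$'' version, which is all the Eliahou bound needs.
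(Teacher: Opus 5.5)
Your proof of the first claim is correct. By counting $k$ and $s$ through $\abs{i\Delta}$, it even avoids the $B_h$ hypothesis that an appeal to Theorem~\ref{thm:E} would need.

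The second claim has a genuine gap. Your argument needs $S^{*n}$ to be split for large $n$, hence needs~\eqref{c1}, and~\eqref{c1} does not follow from collision-freeness. Note that $w$ and $t$ are invariant under $S\mapsto S^*$, and $V_{h-b}^-\ge0$ is exactly~\eqref{c1} (it says that $(h-b)\gamma_1$ is short). So $S^{*n}$ satisfies~\eqref{c1} if and only if $S$ does. A collision-free $S$ with long elements can violate it, and then no $S^{*n}$ is ever split.

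The paper's own example $S(4,3/2,\{0,1,9\},6,62)=\langle62,82,90,91\rangle_{304}$ shows this. It is collision-free with $w=38$ and $t=\rho=6<38=(h-b)w/b$. Its long elements $164,172,173\in2\Gamma$ lie in a class with $\phi_2=0$, so they stay long in every $S^{*n}$. For large $n$ one gets $E(S^{*n})=E_0+3\ell+\rho=-1$, not $E_0+\rho=-13$ as your argument would give. For the same reason, your derivation of collision-freeness via Theorem~\ref{thm:criteria}\ref{itm:split} does not apply.

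The missing idea is to follow elements one at a time rather than aim for splitness.
\begin{itemize}
\item For $\lambda\in i\Gamma$ put $\lambda_\circ=\lambda+\ceil{(h-i)a/b}m-m-c$. Then $\abs{\lambda_\circ}\le m$, and for $i<h$ the element $\lambda$ is short if and only if $\lambda_\circ\ge0$.
\item A direct computation from~\eqref{S*} gives $\lambda_\circ^{*n}=\lambda_\circ+nb\phi_{h-i}$. Here $b\phi_{h-i}\in\{0,\dotsc,b-1\}$, and $\phi_{h-i}=\phi_{h-j}$ if and only if $i\equiv j\pmod b$.
\item This is nondecreasing in $n$, so short elements stay short and $\omega(S^{*n})\le\omega$.
\end{itemize}
A collision in $S^{*n}$ is a congruence $\lambda_\circ^{*n}\equiv\mu_\circ^{*n}\pmod{m+nb}$ between distinct sums $\lambda\in i\Gamma$, $\mu\in j\Gamma$.
\begin{itemize}
\item If $i\not\equiv j$, the difference is $D+nd$ with $\abs{D}\le2m$ and $1\le\abs{d}\le b-1$. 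Once $n>2m$, it is nonzero and of absolute value less than $m+nb$.
\item If $i\equiv j$, the difference is the constant $\lambda_\circ-\mu_\circ$. It is nonzero modulo $m$ because $S$ is collision-free, and its absolute value is less than $m+nb$ once $nb>m$.
\end{itemize}
Hence $S^{*n}$ is collision-free for large $n$. Since $h$, $\ell$, the $\phi_i$ and $\rho$ are unchanged, Theorem~\ref{thm:E} gives $E(S^{*n})=E_0+\ell\,\omega(S^{*n})+\rho\le E$.

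Only large $n$ is needed. So the obstacle you flag at the end is not the real one; \eqref{c1} is.
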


\begin{proof}
From~\eqref{wt}, the numbers~$w$ and~$t$ (equal to~$\rho$ if~$b\mid h$) are
the same in~$S$ and~$S^*$.
This shows that if~$S$ is split, then~$S^*$ is split and thus collision-free.
Using for example~\eqref{E'}, it has the same Eliahou number.

If~$S$ is collision-free, it suffices to show that $S^{*n}$ is collision-free
and has at most as many long elements as~$S$ if~$n$ is large enough.
Let~$\lambda\in i\Gamma$ and let~$\lambda_\circ = \lambda + \ceil{(h-i)a/b}m -
m - c$. The number~$\lambda_\circ$ is in~$(-m,m)$ and~$\lambda$ is short if
and only if~$\lambda_\circ\ge0$.
From~\eqref{S*}, the element~$\lambda^{*n}$ corresponding to~$\lambda$
in~$S^{*n}$ satisfies~$\lambda_\circ^{*n} = \lambda_\circ + \phi_{h-i}bn$.
Therefore, if~$\lambda$ is short, then~$\lambda^{*n}$ is short.

Let now~$\mu\in j\Gamma$. If~$i\nequiv j$
modulo~$b$, then~$\phi_{h-i}\ne\phi_{h-j}$ and~$\lambda_\circ^{*n}$
and~$\mu_\circ^{*n}$ are far apart modulo~$m + bn$ if~$n$ is large.
If~$i\equiv j$, then~$\lambda_\circ^{*n} - \mu_\circ^{*n} = \lambda_\circ -
\mu_\circ$, so if~$S$ is collision-free and~$n$ is large enough, $S^{*n}$ is
collision-free.
\end{proof}

Examples include Bras-Amorós's semigroups~\cite{bras-amoros}, which are~$\hat
S(4,3/2,\{0,1\})^{*n}$, or~$S(4, 3/2, \{0,1\}, 5, m)$ for~$m\ge19$ odd.
These semigroups illustrate the dependency in~$m$ modulo~$b$: with~$h
= 4$ and~$a/b = 3/2$, \eqref{c1} and~\eqref{wt} show that~$\rho\ge 5 + (c + m
- 1\bmod4)$. Since we have~$E_0 = -6$, the only value of~$\rho$ that gives~$E
< 0$ is~$5$, which imposes~$c+m\equiv1\bmod4$. As~$c+m = 6m-5$
by~\eqref{cGamma}, this shows that~$m$ is odd.

Predicting whether Eliahou semigroups exist is not easy. By
Lemma~\ref{lem:Shat}, the semigroup~$\hat S(h,a/b,\Delta)$, where~$\Delta$ is
a~$B_h$~set of cardinality~$\ell-1$ with minimal diameter, has minimal~$\rho$.
Therefore, checking if this semigroup has~$E < 0$ determines if Eliahou
semigroups with~$\omega = 0$ exist. However, the minimal diameter of
a~$B_h$~set is not known if~$\ell$ is large (see~\cite{obryant} for bounds).
We give below a necessary condition and show that it is sufficient
when~$\Delta = \{0,1\}$.

\begin{lemma}
If~$S$ is \h-regular and collision-free, $b = 1$ and either~$\rho = 0$
or~$\omega = 0$, then~$E\ge0$.
\end{lemma}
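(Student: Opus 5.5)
The plan is to specialise Theorem~\ref{thm:E} to $b=1$, where $E_0$ becomes explicit, and then to bound $\rho+\ell\omega$ from below separately in the two cases $\omega=0$ and $\rho=0$.

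\emph{The Farey data.} If $b=1$, the predecessor of $a/1$ among the \h-Farey fractions is $a'/b'=(ah-1)/h$. Indeed $ab'-a'b=1$ forces $b'=h$ and $a'=ah-1$. So $\phi_i=\fpart{-ia}=0$ for all $i$, and $\phi'_i=\fpart{-i/h}=1-i/h$ for $1\le i<h$, with $\phi'_h=0$. Form~\eqref{E} of Theorem~\ref{thm:E} then gives
\[E = -(h-1)s_h + \ell\omega + \rho.\]
It therefore suffices to show $\rho+\ell\omega\ge(h-1)s_h$ in each case.

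\emph{Case $\omega=0$.} By Theorem~\ref{thm:criteria}\ref{itm:cunif}, condition~\eqref{c1} holds, so $t\ge(h-1)w$. Since $b=1$ divides $h$, we have $\phi_h=0$ and hence $\rho=\phi_hm+t=t$. The sumset $h\Gamma$ lies in the interval $[h\gamma_1,c+m)$, which has length $w$. Collision-freeness makes its $s_h$ elements distinct, so $w\ge s_h$. Putting these together, $\rho\ge(h-1)w\ge(h-1)s_h$, and $E\ge0$.

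\emph{Case $\rho=0$.} Here $c+m=ahm$ exactly, because $q=ah-1$. By~\eqref{short} and~\eqref{farey}, an element $\lambda\in i\Gamma$ is short if and only if
\[\lambda\ge c-\floor{(h-i)a'/b'}m = c-\bigl((h-i)a-1\bigr)m = iam.\]
On the other hand, $h$-regularity gives $h\gamma<c+m=ahm$ for every $\gamma\in\Gamma$, so $\gamma<am$. Consequently every $\lambda\in i\Gamma$ with $1\le i\le h-1$ satisfies $\lambda<iam$ and is long, while $0\in0\Gamma$ is short. Thus $\omega=\sum_{i=1}^{h-1}s_i$. Identity~\eqref{isi} then gives
\[(h-1)s_h=\ell\sum_{i=0}^{h-1}\frac{i}{h}\,s_i\le\ell\sum_{i=1}^{h-1}s_i=\ell\omega,\]
and again $E\ge0$.

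\emph{Main obstacle.} The delicate point is the case $\rho=0$. There I must verify the exact short/long threshold using the exact value $c=ahm-m$, and use the strict inequality $\gamma<am$ coming from $h\gamma_{\ell-1}<c+m$. Once every element of $i\Gamma$ for $0<i<h$ is seen to be long, the rest is the identity~\eqref{isi}. In the case $\omega=0$, I need to make sure that~\eqref{c1} is available. If the appeal to Theorem~\ref{thm:criteria}\ref{itm:cunif} outside the split setting is in doubt, I will rederive it directly: the minimal element $(h-1)\gamma_1$ of $(h-1)\Gamma$ must be short, and this is exactly $t\ge(h-1)w$ when $b=1$.
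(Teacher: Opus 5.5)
Your proof is correct and follows the same route as the paper. You specialise \eqref{E} at $b=1$ to get $E=-(h-1)s_h+\ell\omega+\rho$; for $\omega=0$ you combine \eqref{c1} (with $\rho=t$) and $w\ge s_h$, and for $\rho=0$ you show every element of $i\Gamma$ with $0<i<h$ is long. Your $\rho=0$ bookkeeping is more accurate than the paper's. The element $0$ is short, so $\omega=s_1+\dotsb+s_{h-1}$, and \eqref{isi} gives only the inequality $(h-1)s_h\le\ell\omega$. The paper instead writes $\omega=s_0+\dotsb+s_{h-1}=(h-1)s_h/\ell$ and concludes $E=0$, whereas in fact $E>0$ there as soon as $\ell\ge2$.
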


We conjecture that this is still true if both~$\rho$ and~$\omega$ are
positive, as well as if~$S$ is not collision-free.

\begin{proof}
If~$b = 1$, then~$\phi_i = 0$, so~\eqref{E} shows that~$E = -(h-1)s_h +
\ell\omega + \rho$.
If~$\rho = 0$, then~$c + m = ahm$, which entails that every element~$\lambda\in
i\Gamma$ for~$i < h$ satisfies~$\lambda < aim$. The definition~\eqref{short}
shows that every such element is long, so we have~$\omega = s_0+\dotsb+s_{h-1}
= (h-1)s_h/\ell$ and thus~$E = 0$. If~$\omega = 0$, then~\eqref{c1} shows
that~$\rho \ge (h-1)w$; as~$h\Gamma\subset[c+m-w,c+m)$ and~$S$ is
collision-free, we have~$w\ge s_h$ and therefore~$E\ge0$.
\end{proof}

\begin{proposition}
If~$b\mid h$, $b'\ne1$ and~$b\ne1$, then~$\hat S(h,a/b,\{0,1\})$
has~$E < 0$.
\end{proposition}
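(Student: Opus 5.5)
The plan is to reduce the statement to an explicit inequality in $h$, $b$ and $a$, using Lemma~\ref{lem:Shat} and Theorem~\ref{thm:E}. Write $\hat S = \hat S(h,a/b,\{0,1\})$, so $\Delta=\{0,1\}$, $\delta_1 = 1$ and $\ell = 3$.

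\emph{Step 1: structural facts.} By Lemma~\ref{lem:Shat}, $\hat S$ is split. The set $\{0,1\}$ is a $B_h$~set, so Theorem~\ref{thm:criteria}\ref{itm:split} shows that $\hat S$ is canonical and collision-free with $\omega=0$. Theorem~\ref{thm:E} then gives $E = E_0 + \rho$. Since $b\mid h$, we have $\rho = \hat\tau = (h-b)(h+1)/b = (n-1)(h+1)$, where $n = h/b$. It therefore suffices to prove
\[E_0 < -(n-1)(h+1).\]

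\emph{Step 2: compute $E_0$ explicitly from \eqref{E}.} With $\ell=3$ we have $s_i = i+1$, $s_h = h+1$, and $\phi_h = 0$ because $b\mid h$. Also $\phi_{h-i} = \fpart{ia/b}$, since $ha/b\in\mathbb Z$. Hence
\[E_0 = -(h^2-1) + 3\sum_{i=0}^{h-1}(i+1)\fpart{ia/b}.\]
Split $i = pb + r$ with $0\le p<n$ and $0\le r<b$, and let $\pi(r) = ra\bmod b$, which is a permutation of $\{0,\dotsc,b-1\}$ because $\gcd(a,b)=1$. Summing over $p$ first reduces the inequality to a statement about the single quantity
\[\Sigma = \sum_{r=0}^{b-1} r\,\pi(r) = \sum_{r=0}^{b-1} r\,(ra\bmod b),\]
plus elementary sums such as $\sum\pi(r) = b(b-1)/2$. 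As a sanity check, for $h=4$ and $a/b=3/2$ this gives $E_0=-6$ and $\rho=5$, hence $E=-1$, which agrees with the Bras-Amorós example.

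\emph{Step 3: bound $\Sigma$.} By the rearrangement inequality, $\Sigma$ is at most the identity value $\sum r^2$. When $b=h$ (so $n=1$ and $\rho=0$), the argument is exactly that of Lemma~\ref{lem:E0neg}: $b'\ne1$ means $a\not\equiv1\pmod b$, since $ab'\equiv1$, so $\pi$ is not the identity and the inequality is strict. When $b<h$, the bound $\Sigma\le\sum r^2$ should already suffice. The reason is that the contribution $3\sum_p pb\,\fpart{\cdot}$, coming from the repetition of each residue $n$ times, produces a term of order $n^2b^2/2$ times a factor $3/2$. Against $h^2 = n^2b^2$, this leaves a negative margin of order $n^2b^2/4$ (up to lower-order terms), which should beat $(n-1)(nb+1)$.

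\emph{Main obstacle.} The delicate part is carrying out this comparison exactly. I expect small cases, such as $b=2$ with $n=2$ (which gives margin $-1$ as above), to be tight. The plan is to write everything as a polynomial in $n$ and $b$ after substituting the identity bound for $\Sigma$, and then verify positivity of the deficit for $b\ge2$ and $n\ge2$. The hypothesis $b\neq1$ is needed here, since for $b=1$ the preceding lemma shows $E\ge0$. If the identity bound turns out to be too weak in some boundary case, the fallback is the sharper estimate $\Sigma\le\sum r^2-(b-1)$, valid for $\pi\neq\mathrm{id}$. That case can occur since $b'>h-b$ forces $b'>b$ when $h\ge2b$, though this alone does not prevent $a\equiv1\pmod b$, so I would treat $\pi=\mathrm{id}$ directly.
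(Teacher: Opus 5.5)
Your proposal is correct and is essentially the paper's proof. The paper also takes $E=E_0+\hat\tau$ from Lemma~\ref{lem:Shat} and Theorem~\ref{thm:criteria}, applies formula~\eqref{E} with the rearrangement bound $\Sigma\le\sum r^2$ for $b<h$, and uses Lemma~\ref{lem:E0neg} for $b=h$. The exact computation gives $E\le -b(b-1)n(n-1)/4=-(b-1)(h/b-1)h/4$, which is the paper's bound; your leading-order heuristic comparing $n^2b^2/4$ with $(n-1)(nb+1)\approx n^2b$ fails for $b\le 4$, since the $-3bn^2/4$ term you set aside is what makes it work, so the exact polynomial computation you planned is genuinely needed.
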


\begin{proof}
We use~\eqref{E} to compute~$E$, with~$\rho = \hat t$ given by~\eqref{wthat}.
As~$b$ divides~$h$, we have~$\phi_{i+jb} = \phi_i$ and thus:
\[E = -(h-1)s_h +
\ell\sum_{i=0}^{b-1}\left(\phi_{h-i}\sum_{j=0}^{\smash{h/b-1}}s_{i+jb}\right) +
(h/b-1)(h\delta_1+1)\text.\]
The numbers~$(\phi_{h-i})$ for $i = 0,\dotsc,b-1$ are a permutation
of~$(i/b)$, so by the rearrangement inequality, the maximum value is reached
if~$\phi_{h-i} = i/b$. With the values~$\delta_1 = 1$ and~$s_i =
\smchoose2i = i+1$, we compute the sums and find:
\[E \le -\frac{(b-1)(h/b-1)h}{4}\text.\]
This shows that~$E < 0$ if~$b\ne h$ (the case~$b = h$ is given by
Lemma~\ref{lem:E0neg}).
\end{proof}

We can do similar computations with~$\ell\ge4$, but the result is not always
negative: for example, $\hat S(4, 3/2, \{0,1,5\})$ has~$E = 2$, so no Eliahou
semigroups with~$\omega = 0$ exist. However, in this case, there are
semigroups with~$\omega > 0$ and a value of~$\rho$ lower than the
bound~\eqref{c1} (Figure~\ref{fig:bdivh}). The value~$\rho$ can go all the way
down to zero, for example~$S(8,7/4,\{1,2\},0,71) =
\langle71,122,123\rangle_{923}$ which has~$E = -6$.

\begin{figure}[ht]%
\begin{tikzpicture}[semigroups]
\sgdraw{59,78,82,83}{274}
\begin{scope}[yshift=-5cm]
\sgdraw{62,82,90,91}{304}
\foreach \x in {46,54,55} \fill[color2] (\x.5,-.5) circle(.2);
\end{scope}
\end{tikzpicture}\\[3mm]%
\begin{tikzpicture}[semigroups]\sglegend4\end{tikzpicture}
\caption[]{Top: any collision-free semigroup with~$h = 4$, $a/b = 3/2$,
$\ell = 4$ and~$\omega = 0$ has~$\rho\ge21$ and, since~$E_0 = -19$, $E\ge2$.

Bottom: the semigroup~$S(4, 3/2, \{0,1,9\}, 6, 62) =
\langle62,82,90,91\rangle_{304}$ has~$\omega = 3$ but~$E =
-1$ thanks to the lower~$\rho$.\vspace{-.2mm}} \label{fig:bdivh}
\end{figure}

\subsection{The case \texorpdfstring{$b\nmid h$}{b ∤ h}}

If~$b\nmid h$, by contrast, we have~$\phi_h\ne0$. Thus, the numbers~$\rho =
\phi_hm+t$ and~$E = k\ell - qs + \rho$ scale with~$m$. This shows that only
finitely many Eliahou semigroups exist. No example has been published to our
knowledge; the smallest two are shown in Figure~\ref{fig:ndiv}.

\begin{figure}[htb]%
\begin{tikzpicture}[semigroups]
\sgdraw{19,30,31}{106}
\begin{scope}[yshift=-5cm]
\sgdraw{30, 51, 52}{231}
\end{scope}
\end{tikzpicture}\\[3mm]
\begin{tikzpicture}[semigroups]\sglegend5\end{tikzpicture}
\caption[]{Top: the semigroup~$\hat S(4,5/3,\{0,1\}) =
\langle19,30,31\rangle_{106}$, only Eliahou semigroup with~$h = 4$, $a/b =
5/3$ and~$\ell = 3$, with~$E = -1$.

Bottom: the semigroup~$\hat S(5,7/4,\{0,1\}) = \langle30,51,52\rangle_{231}$,
one of seven Eliahou semigroups with~$h = 5$, $a/b = 7/4$ and~$\ell = 3$,
with~$E = -3$.}
\label{fig:ndiv}
\end{figure}

Once again, determining for which values of~$h$, $a/b$ and~$\ell$ Eliahou
semigroups exist seems difficult. Checking the semigroup~$\hat S(h, a/b,
\Delta)$ for~$\Delta$ with minimal diameter only determines if split Eliahou
semigroups exist (unlike the previous case, the Eliahou number depends
on~$m$). We give below a necessary condition and an infinite family; many
others could be built in the same spirit.

\begin{lemma}
If~$S$ is split, collision-free and~$b' = h$, then~$E\ge0$.
\end{lemma}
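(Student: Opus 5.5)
The plan is to start from the form \eqref{E} of Theorem~\ref{thm:E}. Since $S$ is split, Theorem~\ref{thm:criteria}\ref{itm:split} gives $\omega=0$, so
\[E=-(h-1)s_h+\ell\sum_{i=0}^{h-1}\phi_{h-i}s_i-\phi_h s+\rho.\]
Next we pin down the arithmetic when $b'=h$. Since $b+b'>h$ for consecutive $h$-Farey fractions and $b\le h$, we get $b<b'$, so $\bm=b$. From $ab'=a'b+1$ we get $ha\equiv1\pmod b$, hence $\phi_h=\fpart{-1/b}=1-1/b$. This gives $\rho=\phi_hm+t=(1-1/b)m+t$. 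Moreover, with $b'=h$ the split criterion \eqref{csplit} reads $m\ge bt+b(w+1)$, and \eqref{c1} gives $t\ge(h-b)w/b$.

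The main step is to absorb the large negative term $-\phi_hs$ into $\rho$. Because $s=m-r$, we have $\rho-\phi_hs=\phi_h r+t$, so it suffices to show
\[\phi_h r+t+\ell\sum_{i=0}^{h-1}\phi_{h-i}s_i\ \ge\ (h-1)s_h.\]
To bound $r$ from below I would use the splitness directly. All threshold elements lie, modulo $m$, in the union of the shifted intervals $V_i$ from the proof of Theorem~\ref{thm:criteria}. These have lengths $|V_{h-i}|\le(h-i)w/h$, plus the single point $V_0$, so $r\ge m-\tfrac{(h+1)w}2-1$. Substituting $m\ge b(t+w+1)$ and multiplying by $\phi_h=(b-1)/b$ yields a lower bound for $\phi_hr$ that is linear in $t$ and $w$ with coefficient about $(b-1)$. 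Collision-freeness with $h\Gamma\subset[c+m-w,c+m)$ gives $w\ge s_h$, as in the proof of the preceding lemma for $b=1$. Together with $t\ge (h-b)w/b$, the left side is then bounded below by an explicit multiple of $w\ge s_h$.

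The obstacle I expect is exactly this last comparison. The crude estimate of $r$ loses roughly $\tfrac{h+1}{2}w$, and this must be compensated by the $(b-1)(t+w)$ coming from \eqref{csplit} and by the positive sum $\ell\sum\phi_{h-i}s_i$. If the crude count is not enough, I would first try to lower-bound the sum via the rearrangement inequality, using that the $\phi_{h-i}$ run through the multiples of $1/b$; for instance, via \eqref{isi} one gets a lower bound of order $(h-1)s_h/b$. I would also sharpen the count of right generators by noting that the interval $V_h$ of width $w$ contains only $s_h$ threshold elements, so it contributes $w-s_h$ further right generators. Since $b\ge 2$ here ($b'=h\ge3$ forces $a/b$ non-integral, as otherwise $b+b'\le h$ fails), the factor $(b-1)/b\ge1/2$ should make these combined estimates close the inequality. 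I would check the extremal case $\hat S$ with $t=\hat t$ and $m=\hat m$ (Lemma~\ref{lem:Shat}) to confirm the constants.
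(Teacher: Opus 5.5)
Your bounds on $\rho$ are the right ones. The step that actually carries the lemma, $E_0\ge-(h-1)s_h$, is never established, however, and the route you chose for it fails.

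Rewriting $\rho-\phi_hs=\phi_hr+t$ and then bounding $r$ from below is circular. Since $S$ is collision-free, $r=m-s$ with $s=\mchoose{\ell}{h}$ exactly. A lower bound on $r$ is therefore only a weaker upper bound on $s$. With the exact value you are back to needing $\ell\sum_{i<h}\phi_{h-i}s_i\ge\phi_hs-(b-1)$, i.e.\ essentially $E_0\ge-(h-1)s_h$.

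Your crude count is also false, since a half-open interval of length $jw/h$ may contain $\lceil jw/h\rceil$ integers. For example, $\hat S(3,3/2,\{0,1\})=\langle14,19,20\rangle_{47}$ is split and collision-free, with $b'=h=3$, $w=4$ and $s=10$. It has $r=4<m-\frac{(h+1)w}{2}-1=5$.

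Your fallbacks for the $\phi$-sum do not suffice either. Rearranging the multiset of multiples of $1/b$ loses too much. Take $\hat S(7,3/2,\{0,1\})$, with $a'/b'=10/7$, $\ell=3$, $m=58$, $t=20$, $w=8$; all your inequalities are tight there. The values $\phi_{h-i}$ are $1/2$ four times and $0$ three times. The decreasing arrangement against $s_i=i+1$ gives $15$, while $\phi_hs=18$ and the slack is only $b-1=1$. So you only reach $E\ge-2$, whereas actually $E=7$ (the true sum is $24$). A bound of order $(h-1)s_h/b$ is weaker still: for $\hat S(5,5/4,\{0,1\})$ it gives $6$ where $\frac34\cdot21-3$ is needed.

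Finally, $b\ge2$ is false, since every integer $a/1$ has predecessor $(ah-1)/h$. That case is easy (all $\phi_i=0$ and $\rho=t\ge(h-1)w$), but your factor $(b-1)/b\ge1/2$ does not cover it. Also, $b<b'$ comes from $\gcd(b,b')=1$, not from $b+b'>h$.

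The missing idea is to start from \eqref{E'} instead of \eqref{E}. When $b'=h$ you have $\phi'_h=0$, so the $s$-term disappears. Since $\gcd(a',h)=1$, the numbers $\phi'_{h-i}=\fpart{(h-i)a'/h}$, $0\le i<h$, are a permutation of $(i/h)$. As $(s_i)$ is nondecreasing, the rearrangement inequality and \eqref{isi} give $\ell\sum_{i<h}\phi'_{h-i}s_i\le(h-1)s_h$, hence $E\ge-(h-1)s_h+\rho$. Via the equality of \eqref{E} and \eqref{E'}, this is exactly the inequality $\ell\sum\phi_{h-i}s_i\ge\phi_hs$ that your approach lacks. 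The rest is what you already wrote down: by \eqref{csplit}, \eqref{c1} and $w\ge s_h$,
$\rho=\frac{b-1}{b}m+t\ge(b-1)(t+w+1)+t=bt+(b-1)(w+1)\ge(h-1)w+b-1\ge(h-1)s_h$.
No count of right generators is needed.
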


Again, we conjecture that the split and collision-free conditions are
unnecessary.

\begin{proof}
If~$b' = h$, we have~$\phi'_h = 0$ and~$\phi_h = \sfrac{b-1}{b}$. The same
reasoning as the proof of Lemma~\ref{lem:E0neg} applied to~\eqref{E'} gives~$E
\ge-(h-1)s_h + \rho$. From~$\rho = \phi_hm + t$ and~\eqref{csplit}, we
get~$\rho\ge bt + (b-1)(w+1)$ and, with~\eqref{c1}, $\rho\ge (h-1)w + b-1$.
As~$w\ge s_h$, we get~$E\ge0$.
\end{proof}

\begin{proposition} \label{prop:ex2}
If~$b > \ceil{h/2}$, the semigroup~$\hat S\sparen{h,\frac{2b-1}{b},\{0,1\}}$
has Eliahou number:
\[E = -\binom{b-1}{2}\text.\]
\end{proposition}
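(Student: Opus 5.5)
Since $\Delta=\{0,1\}$ is trivially a $B_h$~set, Lemma~\ref{lem:Shat} says that $\hat S = \hat S\sparen{h,\frac{2b-1}{b},\{0,1\}}$ is split. Theorem~\ref{thm:criteria}\ref{itm:split} then gives that it is canonical, collision-free and has $\omega=0$. Theorem~\ref{thm:E} therefore applies with $\ell = 3$, and the plan is to compute $E = E_0 + \rho$ explicitly. We have $s_i = \smchoose2i = i+1$, $s_h = h+1$ and $s = \smchoose3h = \binom{h+2}2$.

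First I identify the Farey data. With $a = 2b-1$, the relation $ab' = a'b+1$ forces $-b'\equiv 1 \pmod b$, so $b'\equiv b-1 \pmod b$. The predecessor has the largest such $b'\le h$. Because $b > \ceil{h/2}$, we have $2b-1\ge h$, so the only admissible value below $h$ is $b' = b-1$, hence $a' = 2b-3$; this needs checking carefully, and it is where the hypothesis on $b$ enters. It follows that $\phi_1 = \fpart{-a/b} = 1/b$ and $\phi_h = \fpart{h/b} = (h-b)/b$. Since $h/2 < b \le h$, we get $\floor{h\phi_1} = 1$.

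Next I use the form \eqref{Ea}, which has a single summand $j=1$ with $\ceil{1/\phi_1} = b$. This gives
\[E_0 = -(h-1)(h+1) + \binom{h+2}{2} - 3\binom{h-b+2}{2}.\]
For $\rho$ I use $\rho = \phi_h\hat m + \hat t$ with $\delta_1 = 1$. From \eqref{taumhat} and \eqref{wthat}, $\hat m = (2b-1)h - b(b-1) + 2b-1$ and $\hat t = (h-b)(h+1)/b$. Then
\[\rho = \frac{h-b}{b}\bigl(\hat m + h + 1\bigr) = \frac{h-b}{b}\cdot b(2h-b+2) = (h-b)(2h-b+2).\]

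Finally I substitute $u = h-b$ and expand $E_0 + \rho$ as a polynomial in $u$ and $b$. The terms in $u$ should cancel, leaving $-(b-1)(b-2)/2 = -\binom{b-1}2$. As a sanity check, $h=b=3$ gives $E_0 = -8+10-3 = -1$ and $\rho = 0$, matching $\langle14,22,23\rangle_{56}$. The main obstacle is confirming that $b' = b-1$ (and the value of $\floor{h\phi_1}$) in the boundary cases, including $b = h$ where $\phi_h = 0$. The rest is routine algebra.
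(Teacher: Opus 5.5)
Your route is the paper's own: you use \eqref{Ea} with $\lfloor h\phi_1\rfloor = 1$, take the predecessor $(2b-3)/(b-1)$, and add $\rho = \phi_h\hat m + \hat t$. Your $E_0$ is correct. However, your value of $\rho$ is wrong, so the cancellation you anticipate in the last step does not occur. From $\hat m = (2b-1)h - b(b-1) + 2b - 1$ you get $\hat m + h + 1 = 2bh - b^2 + b + 2b = b(2h-b+3)$, not $b(2h-b+2)$. Hence $\rho = (h-b)(2h-b+3)$.

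Write $u = h-b$. Your $E_0$ gives $2E_0 = -(b-1)(b-2) - 2bu - 4u^2 - 6u$. Your $2\rho = 2bu + 4u^2 + 4u$ leaves $E = -\binom{b-1}{2} - u$, so the terms in $u$ do not cancel. The correct $2\rho = 2bu + 4u^2 + 6u$ cancels them exactly and yields $E = -\binom{b-1}{2}$. Your sanity check at $h = b = 3$ has $u = 0$, so it cannot detect the error. The case $h = 4$, $b = 3$, that is $\langle 19,30,31\rangle_{106}$, has $q = 6$ and $\rho = 8$, whereas your formula gives $7$.

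A smaller point concerns the step you flagged. The hypothesis $b > \lceil h/2\rceil$ gives $2b-1 \ge h+1$, and it is this strict inequality $2b-1 > h$ that you need, as the paper states; $2b-1 \ge h$ is not enough. If $2b-1 = h$, then $2b-1$ is itself congruent to $b-1$ modulo $b$ and at most $h$, so the predecessor would have $b' = h$. For instance, among $5$-Farey fractions the predecessor of $5/3$ is $8/5$. With $2b-1 > h$, the only candidate is $b' = b-1$. Then $\phi_1 = 1/b$, $\phi_h = (h-b)/b$ and $\lfloor h\phi_1\rfloor = 1$ hold uniformly, including at $b = h$.
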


In this case, we have~$\phi_h = h/b - 1$. As~$\rho/m>\phi_h$, by
taking~$b = \lceil h/2\rceil + 1$, we
get Eliahou semigroups with the ratio~$\rho/m$ arbitrarily close to~$1$.

\begin{proof}
We have~$\phi_1 = 1/b$ and the condition~$b > \ceil{h/2}$ implies
that~$\floor{h\phi_1} = 1$, so we use \eqref{Ea}. We have~$s_h = h+1$ and~$s =
\sbinom{h+2}{2}$.
Moreover, since~$2b-1 > h$, the
\h-Farey predecessor is~$\smash{\frac{a'}{b'}} = \smash{\frac{2b-3}{b-1}}$.
We compute~$\rho = (h/b-1)\hat m + \hat t$ from~\eqref{taumhat}
and~\eqref{wthat} and the computation boils down to~$E = -\sbinom{b-1}{2}$.
\end{proof}

\subsection{Beyond \texorpdfstring{$h$-}{h-}regular collision-free semigroups}
\label{sec:beyond}

As shown in~\cite{delgado,eliahou-fromentin}, the Eliahou number can
get arbitrarily low. This leaves room for semigroups to deviate from the
patterns that we presented while still having~$E < 0$: unnecessarily
large~$\rho$ and~$\omega$, collisions, semigroups not quite~\h-regular,
etc.
This permits semigroups with smaller conductor than otherwise
possible (Figure~\ref{fig:beyond}). Eventually, larger deviations are possible.
For example, the semigroup:
\[\langle435, 684, 711, 720, 723, 724, 1346\rangle_{1740},\]
mentioned in Section~\ref{sec:practical},
is a~$3$\nobreakdash-regular split semigroup with an extra out-of-place
generator and still has an Eliahou number of~$-3$. We do not know if there are
Eliahou semigroups truly far from being \h-regular, for instance with left
generators belonging to several distinct intervals.

\begin{figure}[ht]%
\begin{tikzpicture}[semigroups]
\sgdraw{55,82,85,90,91}{219}
\begin{scope}[yshift=-5cm]
\sgdraw{58, 84, 91, 95, 96}{232}
\fill[color2] (52.5,-.5) circle(.2);
\end{scope}
\begin{scope}[yshift=-10cm]
\sgdraw{59, 88, 90, 95, 99}{235}
\end{scope}
\end{tikzpicture}\\[3mm]%
\begin{tikzpicture}[semigroups]\sglegend3\end{tikzpicture}
\caption[]{Three Eliahou semigroups with~$\ell = 5$, $q = 4$ and~$E_0 = -10$.
A long element increases~$E$ by~$5$ and a missing threshold element by~$4$, so
they all have~$E = -1$.

Top: the semigroup~$S(3, 5/3, \{0,1,6,9\}, 1, 55) =
\langle55,82,85,90,91\rangle_{219}$ has two collisions.

Middle: the semigroup~$S(3,5/3,\{0,1,5,12\}, 0, 58) =
\langle58,84,91,95,96\rangle_{232}$ has one collision and one long element
in~$2\Gamma$.

Bottom: the semigroup~$S(3, 5/3, \{-2, 2, 7, 9\}, 1, 59) =
\langle59,88,90,95,99\rangle_{235}$ has one collision and is
not~$3$\nobreakdash-regular: one element of~$3\Gamma$ is beyond~$c+m$.}
\label{fig:beyond}
\end{figure}

Examples include Almirón and Moyano-Fernández's
semigroups~\cite{almiron-moyano-fernandez}, which are~$40$ semigroups of the
form~$S(4, a/4, \Delta, \rho, 100)$ for~$a\in\{7,11\}$,
$\Delta\subset\{-2,\dotsc,4\}$, $\abs\Delta=3$ and~$\rho\le3$. Some of these
semigroups have a negative element in~$\Delta$ and are thus not~$4$-regular,
while others have a collision. They have an Eliahou number from~$-1$ to~$-8$
(the value~$E_0$ for~$h = 4$, $a/b=7/4$ and~$\ell = 4$ is~$-14$).

\section*{Acknowledgments}

I am grateful to Maria Bras-Amorós for helpful discussions during the research
that led to this paper.

The experiments in this work were carried out on Magi, the experimental
platform of Université Sorbonne Paris Nord (USPN) dedicated to research.
This platform offers researchers at the institution High-Performance Computing
(HPC), cloud and storage services.

\bibliographystyle{abbrv}
\bibliography{biblio}{}

\end{document}